\documentclass[a4paper,11pt]{amsart}

\usepackage[left=2cm,top=3cm,bottom=2.5cm,right=2.5cm]{geometry}

\usepackage{amsthm}
\usepackage{amssymb}
\usepackage{amsmath}
\usepackage{mathtools}
\usepackage{pdfpages}
\usepackage{changes}
\usepackage{exscale,cite,color,amsopn}
\usepackage[colorlinks=true,pdftex,unicode=true,linktocpage,bookmarksopen,hypertexnames=true]{hyperref}
\usepackage{aliascnt}
\usepackage{csquotes}
\usepackage{colortbl}
\usepackage{enumitem}
\usepackage{verbatim}
\usepackage{tikz-cd}
\usepackage[capitalise]{cleveref}

\usetikzlibrary{trees}

\renewcommand{\leq}{\leqslant}
\renewcommand{\geq}{\geqslant}

\DeclareMathOperator{\Fix}{Fix}
\DeclareMathOperator{\id}{id}

\DeclareMathOperator{\Ret}{Ret}
\DeclareMathOperator{\Soc}{Soc}

\newcommand{\Z}{\mathbb{Z}}

\newcommand{\Aut}{\operatorname{Aut}}

\newcommand{\End}{\mathrm{End}}

\newcommand{\Sym}{\mathfrak{S}}

\newcommand{\G}{\mathcal{G}}

\newcommand{\subtau}{\underset{\tau}{\ast}}
\newcommand{\subtaucab}[1]{\underset{\tau^{#1}}{\ast}}
\newcommand{\subcab}[1]{\overset{#1}{\ast}}

\makeatletter
\numberwithin{equation}{section}
\numberwithin{figure}{section}
\numberwithin{table}{section}
\newtheorem{thm}{Theorem}[section]
\newtheorem*{thm*}{Theorem}
\newtheorem{lem}[thm]{Lemma}
\newtheorem{cor}[thm]{Corollary}
\newtheorem{pro}[thm]{Proposition}

\theoremstyle{definition}
\newtheorem{defn}[thm]{Definition}
\newtheorem*{defn*}{Definition}

\newtheorem{conjecture}[thm]{Conjecture}

\newtheorem{conventions}[thm]{Conventions}
\newtheorem{convention}[thm]{Convention}
\newtheorem*{convention*}{Convention}
\newtheorem{rem}[thm]{Remark}

\AddToHook{env/pro/begin}{\crefalias{thm}{pro}}
\AddToHook{env/problem/begin}{\crefalias{thm}{problem}}
\AddToHook{env/conjecture/begin}{\crefalias{thm}{conjecture}}
\AddToHook{env/lem/begin}{\crefalias{thm}{lem}}
\AddToHook{env/cor/begin}{\crefalias{thm}{cor}}
\AddToHook{env/defn/begin}{\crefalias{thm}{defn}}
\AddToHook{env/convention/begin}{\crefalias{thm}{convention}}
\AddToHook{env/rem/begin}{\crefalias{thm}{rem}}

\makeatother

\title{Proof of Rump's Retraction Conjecture for Quasilinear Cycle Sets}
\author{Carsten Dietzel}
\date{\today}

\address[Carsten Dietzel]{Department of Mathematics and Data Science, Vrije Universiteit Brussel, Pleinlaan 2, 1050 Brussel, Belgium}
\email{Carsten.Dietzel@vub.be}

\begin{document}

\begin{abstract}
    Nondegenerate cycle sets were introduced by Rump as an algebraic framework for nondegenerate, involutive solutions to the Yang--Baxter equation. Nondegenerate cycle set structures on abelian groups, such as translation-invariant and quasilinear cycle sets, are of particular interest when studying the retraction problem in the theory of the Yang--Baxter equation. In this article, we solve the retraction problem for finite quasilinear cycle sets by showing that each nontrivial quasilinear cycle set is retractable, thus proving a conjecture of Rump.
\end{abstract}

\maketitle

\section{Introduction} \label{sec:introduction}

Let $V$ be a finite-dimensional vector space over a field $k$ and let $R: V^{\otimes 2} \to V^{\otimes 2}$ be an automorphism of $k$-vector spaces. Then $V$ is said to satisfy the (linear) \emph{quantum Yang--Baxter equation} if the following equality holds in $\End_k(V^{\otimes 3})$.
\begin{equation} \label{eq:linear_QYBE}
    (R \otimes \id_V) (\id_V \otimes R) (R \otimes \id_V) = (\id_V \otimes R) (R \otimes \id_V) (\id_V \otimes R). \tag{YBE1}
\end{equation}
This braiding condition originated from the work of the physicists James B. McGuire \cite{mcguire}, Chen-Ning Yang \cite{Yang_YB} and Rodney J. Baxter \cite{Baxter_YB} on integrable systems but was soon discovered to have applications in geometric topology by Turaev \cite{Turaev_links} who developed a framework to systematically construct knot invariants from solutions to the Yang--Baxter equation, thus providing a vast generalization to the famous Jones polynomial \cite{jones_polynomial}.

In 1990, Drinfeld proposed studying \emph{set-theoretic solutions} to the Yang--Baxter equation \cite{Drinfeld_Problems}. These are given by a finite set $X$ and a bijection $r: X^2 \to X^2$ that satisfies the following equality of bijections on $X^3$.
\begin{equation} \label{eq:set_QYBE}
    (r \times \id_X) (\id_X \times r) (r \times \id_X) = (\id_X \times r) (r \times \id_X) (\id_X \times r).\tag{YBE2}
\end{equation}
Linearizing such a solution yields a solution to \eqref{eq:linear_QYBE} which can be submitted to a deformation process.

In the middle of the 1990s, Gateva-Ivanova and van den Bergh \cite{GIVdB_IType}, and Etingof, Schedler and Soloviev \cite{ESS_YangBaxter} successfully developed a systematic approach to the investigation of nondegenerate involutive set-theoretic solutions to the Yang--Baxter equation by means of group-theoretic methods, thus initiating the algebraic theory of the Yang--Baxter equation.

Before continuing, we explain the notions of \emph{nondegeneracy} and \emph{involutivity}. A solution $(X,r)$ is called \emph{involutive} if $r^2 = \id_{X^2}$. Furthermore, writing $r(x,y) = (\lambda_x(y),\rho_y(x))$, a solution is called \emph{nondegenerate} if the maps $\lambda_x$ and $\rho_x$ are bijections of $X$ for all $x \in X$.

\begin{convention}
    From now on, by a \emph{solution}, we will mean a set-theoretic solution to the Yang--Baxter equation that is finite, nondegenerate and involutive.
\end{convention}

Rump proved that each solution on a set $X$ corresponds to a \emph{cycle set} structure on $X$, and vice versa \cite{Rump_Decomposition}. We give the definition of a cycle set and outline the Rump correspondence.

\begin{defn}
    A \emph{cycle set} is a pair $(X,\ast)$ where $X$ is a set and $\ast: X \times X \to X$ ; $(x,y) \mapsto x \ast y$ is a binary operation that satisfies the following axioms:
    \begin{align}
       \textnormal{ the map } \sigma_x: X \to X; & \ y \mapsto x \ast y \textnormal{ is bijective for } x \in X\tag{C1} \label{eq:c1} \\
        (x \ast y) \ast (x \ast z) & = (y \ast x) \ast (y \ast z) \ (x,y,z \in X). \tag{C2} \label{eq:c2}
    \end{align}
    Furthermore, we call a cycle set \emph{nondegenerate} if
    \begin{equation} \label{eq:c3}
        \textnormal{the \emph{diagonal map} } T_X: X \to X; \ x \mapsto x \ast x \textnormal{ is a bijection.} \tag{C3}
    \end{equation}
\end{defn}

By a result of Rump \cite{Rump_Decomposition}, for each solution $(X,r)$ with $r(x,y) = (\lambda_x(y), \rho_y(x))$, the operation $x \ast y = \lambda_x^{-1}(y)$ turns $X$ into a nondegenerate cycle set. Vice versa, given a nondegenerate cycle set operation $\ast$ on $X$, the map $r(x,y) = (\sigma_x^{-1}(y),\sigma_x^{-1}(y) \ast x)$ defines a solution on $X$. This correspondence is bijective. 

\begin{conventions}
\begin{enumerate}
    \item Throughout this article, when writing \emph{cycle set}, we will mean \emph{finite, nondegenerate cycle set}.
    \item When referring to results in the literature that use the language of solutions, we will in general translate them into the language of cycle sets. This is justified by Rump's correspondence between solutions and cycle sets.
\end{enumerate}
\end{conventions}

Note that by a result of Rump \cite[Theorem 2]{Rump_Decomposition}, each finite cycle set satisfies \eqref{eq:c3}, so the attribute \emph{nondegenerate} is actually redundant.

\medskip

In the following, a cycle set $X$ is called \emph{squarefree} if its diagonal is trivial, that is, $T_X = \id_X$. Furthermore, $X$ is said to be retractable if there are distinct $x,x' \in X$ such that $x \ast y = x' \ast y$ for all $y \in X$.

In \cite{GI_strong_conjecture}, Gateva-Ivanova poses the following conjecture on squarefree solutions that, in terms of cycle sets, is stated as follows.
\begin{conjecture} \label{conj:retraction_conjecture}
    If $X$ is a squarefree cycle set with $|X| > 1$, then $X$ is retractable.
\end{conjecture}

Gateva-Ivanova's conjecture -- also called the \emph{strong conjecture} -- was ultimately refuted by Vendramin \cite{Vendramin_extension} who developed a method to construct certain coverings of cycle sets that has later been conceptualized by Lebed and Vendramin in form of a cohomology theory for braided sets \cite{Lebed_Vendramin_Homology}.

However, the \emph{retraction problem} -- that is, the problem of finding criteria for retractability of cycle sets -- is still relevant, as a retractable cycle set can, in principle, be constructed from a smaller cycle set - its \emph{retraction} - as an extension \cite{Vendramin_extension}. The definition of a retraction is given in \cref{sec:preliminaries}.

In \cite{Rump_Quasilinear}, Rump addresses the retraction problem for \emph{translation-invariant} and \emph{quasilinear cycle sets}. Here, a cycle set $X$ is called \emph{translation-invariant} if $X$ is squarefree and admits an action by a regular, abelian automorphism group $A$. Identifying $X$ with $A$, translation-invariance translates to $0 \ast 0 = 0$, together with the invariance condition
\begin{equation}
    (a + c) \ast (b + c) = (a \ast b) + c \quad (a,b,c, \in A).
\end{equation}
On the other hand, a cycle set structure on an abelian group $A$ is termed \emph{quasilinear} if it satisfies the condition
\begin{equation}
    a \ast (b + c) = a \ast b + (a - b) \ast c.
\end{equation}
For another, equivalent, definition in the language of $\tau$-groups, see \cref{sec:quasilinear_cycle_sets}. Rump showed that both translation-invariant and quasilinear cycle sets can be obtained as certain \emph{$\tau$-groups}, which are abelian groups $A$ with a bijection $\tau: A \to A$ such that $\tau(0) = 0$. Furthermore, using the language of $\tau$-groups, he establishes a bijective correspondence between translation-invariant cycle sets and \emph{rational} quasilinear cycle sets. Details on the connections between $\tau$-groups and cycle sets can be found in \cite{Rump_Quasilinear}.

For quasilinear cycle sets, Rump poses the following conjecture.

\begin{conjecture} \label{conjecture:retraction_conjecture_intro}
    If $A$ is a quasilinear cycle set with $|A| > 1$, then $A$ is retractable.
\end{conjecture}

If true, \cref{conjecture:retraction_conjecture_intro} also implies the following statement for cycle sets of size $p$, posed by Rump as another conjecture.

\begin{conjecture} \label{conj:size_p_is_retractable}
    If $A$ is a quasilinear cycle set of size $p$, where $p$ is a prime, then there is an isomorphism of quasilinear cycle sets $A \cong \Z_p$, where $x \ast y = \alpha \cdot y$ for some fixed $\alpha \in \Z_p \setminus \{ 0 \}$.
\end{conjecture}

The goal of this article is to prove \cref{conjecture:retraction_conjecture_intro}. The proof uses the theory of left braces that has been developed by Rump for studying cycle sets \cite{Rump_braces}, together with the cabling method of Lebed, Ram\'irez and Vendramin \cite{LRV_Cabling}. The main new ingredient in the proof is the \emph{extended permutation group} of a quasilinear cycle set, a modification of the classical permutation group \cite{ESS_YangBaxter} that also encodes the underlying abelian group.

We give an overview of the structure of this work.

In \cref{sec:preliminaries}, we recall some facts concerning braces, cycle sets and the cabling method. In order to adjust the cabling method to our needs, we reformulate it in terms of the permutation brace of a solution, as opposed to the method in \cite{LRV_Cabling} which uses the structure brace.

In \cref{sec:quasilinear_cycle_sets}, we give a more convenient, but equivalent, definition of quasilinear cycle sets in terms of $\tau$-groups. Moreover, we recall the necessary definitions and results from \cite{Rump_Quasilinear} on quasilinear cycle sets and give Rump's reformulation of \cref{conjecture:retraction_conjecture_intro} in terms of the \emph{socle} of a quasilinear cycle set.

In \cref{sec:extended_permutation_group}, we continue by defining the \emph{extended permutation group} $\Tilde{\G}(A)$ of a quasilinear cycle set $A$. One of its most important properties is its solvability (\cref{thm:tilde_G_is_solvable}), which will be essential in the proof of \cref{conj:size_p_is_retractable}.

In \cref{sec:extended_lambda_action}, we prove the existence of an action - the \emph{$\Tilde{\lambda}$-action} - of $\Tilde{\G}(A)$ by automorphisms of $(\G(A),+)$, the additive group of the permutation brace of a quasilinear cycle set, thereby extending the $\lambda$-action of the permutation brace $\G(A)$ (\cref{thm:extended_lambda_action}). While $\Tilde{\G}(A)$ does not admit a compatible brace structure, it does act as a transitive subgroup of the holomorph of $(\G(A),+)$ (\cref{pro:tilde_g_is_transitive_affine}).

\medskip

Finally, in \cref{sec:solution_of_rumps_conjecture}, we prove Rump's conjecture with the machinery developed in the previous sections. While it is possible to give a proof of Rump's conjecture that uses the extended permutation group $\Tilde{\G}(A)$ while avoiding the use of the $\Tilde{\lambda}$-action, we believe that it allows a more streamlined approach and is of independent interest.

We remark that we do not address the retraction problem for translation-invariant cycle sets which has also been investigated by Rump.

\medskip

\paragraph{\textbf{Funding}} The author is supported by the FWO Senior Postdoctoral Fellowship FWOTM1244. This work was partially supported by the project OZR3762 of Vrije Universiteit Brussel.

\medskip

\paragraph{\textbf{Acknowledgements}} I would like to thank Ilaria Colazzo and Leandro Vendramin for encouraging me to work on the retraction conjecture for quasilinear cycle sets and for helpful discussions during the early stages of this project.

\section{Preliminaries} \label{sec:preliminaries}

In this section, we collect terminology and results that will be needed in this work. We will provide references for more detailed discussions and restrict ourselves to give exact references only for nontrivial results.

\subsection{Brace theory}

In this article, we make extensive use of the theory of \emph{left braces} (which, in the more general theory of skew braces, are also labelled \emph{skew left braces of abelian type}). For details, we refer the reader to \cite{CJO_Braces,Guarnieri_Vendramin}.

\begin{defn}
    A \emph{left brace} (or \emph{brace}, for short) is a triple $(A,+,\circ)$ where $A$ is a set and $+,\circ: A \times A \to A$ are binary operations such that $(A,\circ)$ is a group, called its \emph{multiplicative group}, $(A,+)$ is an abelian group, called the \emph{additive group}, and such that for all $a,b,c \in A$:
    \begin{equation} \label{eq:brace_law}
        a \circ (b + c) + a = a \circ b + a \circ c.
    \end{equation}
\end{defn}

\begin{conventions}\begin{enumerate}
        \item The additive identity of a brace $A$ is denoted by $0$. Note that this coincides with the multiplicative identity of $A$, which can be verified by plugging $b = c = 0$ into \eqref{eq:brace_law}. For an element $a \in A$, its additive inverse is denoted by $-a$ and its multiplicative inverse by $a^{-1}$.

        \item The trivial subgroup of a - not necessarily abelian - group is always denoted by $0$.
    \end{enumerate}
\end{conventions}

Note that every abelian group $B = (B,+)$ gives rise to a \emph{trivial} brace, meaning that $a \circ b = a + b$ for all $a,b \in B$. We denote this brace $(B,+,+)$ by $\mathrm{Triv}(B)$.

In a brace, the multiplicative group acts by automorphisms of the additive group by means of the \emph{$\lambda$-action}: for each $a \in A$, one defines $\lambda_a: A \to A$ ; $b \mapsto a \circ b - a$. It can be proven \cite[Lemma 1]{CJO_Braces} that this provides a well-defined group homomorphism
\begin{equation}
    \lambda: A \to \Aut(A,+) ; \quad a \mapsto \lambda_a,
\end{equation}
meaning that $\lambda_a(b+c) = \lambda_a(b) + \lambda_a(c)$ and $\lambda_{a \circ b}(c) = \lambda_a(\lambda_b(c))$ for all $a,b,c \in A$.

\medskip

We now recall substructures of a brace.

If $B$ is a brace, a \emph{subbrace} is a subset $A \subseteq B$ such that $A$ is a subgroup of both $(B,+)$ and $(B,\circ)$. This implies that $A$ itself is a brace under the operations inherited from $B$. Furthermore, if $B$, $C$ are braces, we call a map $f: B \to C$ a brace \emph{homomorphism} if $f$ is a homomorphism between the additive and the multiplicative groups.

A \emph{left ideal} of a brace $B$ is a subgroup $I \leq (B,+)$ that is invariant under the $\lambda$-action, that is for all $a \in B$, $b \in I$, we have $\lambda_a(b) \in I$. As the multiplication in a brace can be expressed as $a \circ b = a + \lambda_a(b)$, each left ideal is a subbrace.

\begin{convention}
    From now on, if not stated otherwise, all braces considered are assumed to be finite.
\end{convention}

For a brace $B$, we denote the additive order of an element $a \in B$ by $o_+(a)$. Then for any prime $p$, the \emph{$p$-primary component} is defined as
\[
B_p = \{ a \in A : o_+(a) = p^k \textnormal{ for some } k \geq 0 \}.
\]
As $B_p$ is a characteristic subgroup of $(B,+)$, it is invariant under the $\lambda$-action. Therefore, $B_p$ is a left ideal in $B$. Similarly, we see that the \emph{$p'$-primary component},
\[
B_{p'} = \{ a \in B : p \nmid o_+(a) \},
\]
is a left ideal of $B$. We will use the notion of primary components also when talking about subgroups of abelian groups that do not necessarily come with a brace structure.

\medskip

We will make use of the following statement that is also proven in \cite{CO_SquarefreeIndecomposable}.

\begin{pro} \label{pro:normal_subgroups_acting_trivial}
    Let $A$ be a brace, and let $I,J \leq A$ be left ideals with $I \cap J = 0$. Suppose that $M$ is a normal subgroup of $(A,\circ)$ with $M \subseteq I$. Then for all $a \in M$, $b \in J$, we have $\lambda_a(b) = b$.
\end{pro}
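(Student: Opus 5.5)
We want to show that for $a \in M$ and $b \in J$ we have $\lambda_a(b) = b$. The plan is to write the difference $\lambda_a(b) - b$ as the additive difference of two elements, and then show that this difference lies in both $I$ and $J$. Since $I \cap J = 0$, it must then vanish.

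First, $\lambda_a(b) - b$ lies in $J$. Indeed, $J$ is a left ideal and $b \in J$, so $\lambda_a(b) \in J$. As $J$ is an additive subgroup, $\lambda_a(b) - b \in J$.

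Second, $\lambda_a(b) - b$ lies in $I$, and here normality of $M$ in $(A,\circ)$ enters. Consider the multiplicative conjugate $c := b \circ a \circ b^{-1} \in M \subseteq I$. Then $c \circ b = b \circ a$. Expanding both sides with $x \circ y = x + \lambda_x(y)$ gives
\[
c + \lambda_c(b) = b + \lambda_b(a).
\]
Rearranging,
\[
\lambda_c(b) - b = \lambda_b(a) - c.
\]
The right-hand side lies in $I$: we have $c \in I$, and $\lambda_b(a) \in I$ because $a \in I$ and $I$ is a left ideal. The left-hand side lies in $J$ by the first step, applied to $c \in M$. Hence $\lambda_c(b) - b \in I \cap J = 0$, so $\lambda_c(b) = b$.

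This argument works for every element of $M$ and every element of $J$. The only delicate point is that it produces the conclusion for the conjugate $c$ rather than for $a$ directly. This is harmless, because $M$ is normal: conjugation by $b$ is a bijection $M \to M$. For a given $a \in M$, we can therefore apply the argument to $a' := b^{-1} \circ a \circ b \in M$. Its conjugate $b \circ a' \circ b^{-1}$ equals $a$, so we obtain $\lambda_a(b) = b$ directly.

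Equivalently, one can phrase the argument without naming $c$. From $a \circ b = b \circ a'$, where $a' = b^{-1} \circ a \circ b \in M$, we get
\[
a + \lambda_a(b) = b + \lambda_b(a').
\]
Hence $\lambda_a(b) - b = \lambda_b(a') - a$, which lies in $I$, while the left-hand side lies in $J$. Therefore $\lambda_a(b) - b \in I \cap J = 0$, that is, $\lambda_a(b) = b$ for all $a \in M$ and $b \in J$.

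The main point to get right is the choice of conjugate, so that the $J$-side expression is exactly $\lambda_a(b) - b$; the rest is routine bookkeeping with the identity $x \circ y = x + \lambda_x(y)$.
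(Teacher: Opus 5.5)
Your proof is correct and is essentially the paper's argument. Normality gives $a \circ b = b \circ a'$ with $a' \in M$, and expanding via $x \circ y = x + \lambda_x(y)$ yields $a + \lambda_a(b) = b + \lambda_b(a')$, where $I \cap J = 0$ separates the $I$- and $J$-parts and forces $\lambda_a(b) = b$. The detour through $c = b \circ a \circ b^{-1}$ is unnecessary, and your final paragraph is exactly the paper's proof.
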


\begin{proof}
    Let $a \in M$, $b \in J$. Then as $M$ is normal, there is an $a' \in M$ with $a \circ b = b \circ a'$. We calculate:
    \[
    b + \lambda_b(a') = b \circ a' = a \circ b = a + \lambda_a(b).
    \]
    As $\lambda_b(a') \in I$, $\lambda_a(b) \in J$, the condition $I \cap J = 0$ implies $a = \lambda_b(a')$ and $b = \lambda_a(b)$, thus proving the proposition.
\end{proof}

\medskip

\subsection{Cycle sets}

By Rump's correspondence (see \cref{sec:introduction}), the theory of (nondegenerate) \emph{cycle sets} is equivalent to the theory of (involutive, nondegenerate) solutions to the Yang--Baxter equation and has the advantage of being more compact, in that their definition requires only one binary operation. Details can be found in \cite{Rump_Decomposition}.

\medskip

If $f: X \to Y$ is a map between cycle sets, $f$ is called a \emph{homomorphism} if $f(x \ast y) = f(x) \ast f(y)$. Derived notions, such as \emph{isomorphism}, are defined in the usual way.

A cycle set $X = (X, \ast)$ is called \emph{trivial} if for all $x,y \in X$, we have $x \ast y = y$.

\medskip

An important property of cycle sets investigated in this article is their retractability. We elaborate on that notion. Given a cycle set $X$, one defines on $X$ the equivalence relation $x \sim y \Leftrightarrow \sigma_x = \sigma_y$. The relation $\sim$ is, in fact, a congruence relation on $X$ (\cite[Lemma 2]{Rump_Decomposition}), therefore $\Ret(X) = X/\!\!\sim$ is again a cycle set under the operation $[x] \ast [y] = [x \ast y]$. The cycle set $\Ret(X)$ is called the \emph{retraction} of $X$. Moreover, the canonical projection $X \twoheadrightarrow \Ret(X)$; $x \mapsto [x]$ is a homomorphism of cycle sets.

If the canonical projection $X \twoheadrightarrow \Ret(X)$ is an isomorphism (or equivalently, if the equivalence relation $\sim$ is trivial), we say that $X$ is \emph{irretractable}. For finite cycle sets, this can also be stated as an equality $|X| = |\Ret(X)|$. A cycle set is \emph{retractable} if it is not irretractable.

\medskip

We now recall the connection between brace theory and cycle sets.

If $B$ is a brace, the underlying set $B$ has a canonical cycle set structure that is given by the operation $g \ast h = \lambda_g^{-1}(h)$ (\cite[Lemma 2]{CJO_Braces}).
Therefore, each brace gives rise to a cycle set.

Vice versa, there are several ways to associate a brace to a cycle set. For this article, the following is relevant:

Let $X = (X,\ast)$ be a cycle set. One defines the \emph{permutation group} of $X$ as the subgroup
\[
\G(X) = \left\langle \sigma_x : x \in X \right\rangle \leq \Sym_X.
\]
 For an element $x \in X$, define the element $\lambda_x \in \G(X)$ by $\lambda_x = \sigma_x^{-1}$.

 The following result is of particular importance in the study of cycle sets (resp. set-theoretic solutions) by means of brace theory:

 \begin{thm} \label{thm:GX_is_a_brace}
     For a cycle set $X$, there is a unique way to define an addition $+$ on $\G(X)$ such that $\G(X,+,\circ)$ is a brace, where $\circ$ is the composition of permutations, and such that for all $x,y \in X$,
     \begin{equation} \label{eq:brace_law_on_GX}
         \lambda_x \circ \lambda_y = \lambda_x + \lambda_{\lambda_x(y)}.
     \end{equation}
 \end{thm}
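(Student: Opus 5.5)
The plan is to build the additive structure on $\G(X)$ through the structure group of $X$, transport it to $\G(X)$ by a quotient, and then deduce uniqueness from the fact that the $\lambda_x$ generate. The route is Rump's: the free abelian group $\Z^{(X)}$ carries a brace structure whose multiplicative group is the structure group $G(X)$, and the relevant quotient is by the socle.

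\emph{Step 1: the brace on $\Z^{(X)}$.} Write $e_x$ for the basis vectors. Set $\lambda_x := \sigma_x^{-1}$ and extend it linearly to an automorphism of $\Z^{(X)}$. Then define a map $\Lambda: \Z^{(X)} \to \Aut(\Z^{(X)})$ recursively on word length by
\[
\Lambda_{a + e_y} = \Lambda_a \lambda_{\Lambda_a^{-1}(y)},
\]
where the action of $\Lambda_a$ on $X$ is the permutation action. Put $a \circ b = a + \Lambda_a(b)$. The cycle set axiom \eqref{eq:c2} is exactly what makes $\Lambda$ well defined, that is, independent of the order in which basis vectors are added. Once that holds, $\Lambda_{a\circ b} = \Lambda_a\Lambda_b$ follows, so $(\Z^{(X)},+,\circ)$ is a brace with $\lambda$-map $\Lambda$. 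I would take this construction from \cite{Rump_braces}/\cite{CJO_Braces} rather than recheck it; the well-definedness check above is the one point I would verify in detail.

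\emph{Step 2: pass to $\G(X)$.} The map $\Lambda: (\Z^{(X)},\circ) \to \Sym_X$ is a group homomorphism. Its image is $\G(X)$, since the image of $e_x$ is $\lambda_x = \sigma_x^{-1}$, so it contains all generators. Its kernel $S = \{a : \Lambda_a = \id\}$ is the socle. Next I would show that $S$ is an ideal:
\begin{itemize}
  \item It is normal in $(\Z^{(X)},\circ)$, being a kernel.
  \item It is $\Lambda$-invariant, because $\Lambda_{\Lambda_c(a)} = \Lambda_c\Lambda_a\Lambda_c^{-1}$. This identity follows from the recursion and the fact that each $\Lambda_c$ permutes the basis.
  \item It is an additive subgroup, because on $S$ we have $a \circ b = a+b$.
\end{itemize}
Hence $\Z^{(X)}/S$ is a brace, and $\Lambda$ identifies its multiplicative group with $(\G(X),\circ)$. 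Transporting the addition, we get $\Lambda_a + \Lambda_b := \Lambda_{a+b}$. This is well defined because $a \equiv a' \pmod S$ additively is the same as multiplicatively, as $S$ is an ideal. The identity \eqref{eq:brace_law_on_GX} then reads
\[
\lambda_x\circ\lambda_y = \Lambda_{e_x\circ e_y} = \Lambda_{e_x + e_{\lambda_x(y)}} = \lambda_x + \lambda_{\lambda_x(y)}.
\]

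\emph{Step 3: uniqueness.} Suppose $+'$ is another addition with the same properties. In any brace, $a\circ b = a + \lambda_a(b)$, and the $\lambda$-action is determined by $\circ$ together with $+$. By induction on word length, I would show that every element of $\G(X)$ is a $+'$-sum of generators $\lambda_x$; this uses $\lambda_x\circ\lambda_y = \lambda_x +' \lambda_{\lambda_x(y)}$ repeatedly. The key intermediate fact is that in any such brace, $\lambda^{+'}_g(\lambda_y) = \lambda_{g(y)}$ for $g \in \G(X)$. This holds for generators by the defining relation, and it is multiplicative in $g$. Consequently, $g\circ\lambda_y = g +' \lambda_{g(y)}$ for every $g$, and this pins down $g +' \lambda_z$ for all $g$ and $z$. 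Since the $\lambda_z$ generate additively, $+'$ coincides with $+$.

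The main obstacle is Step 1, namely the well-definedness of $\Lambda$ (equivalently, the bijective 1-cocycle/brace structure on $\Z^{(X)}$). This is where \eqref{eq:c2} is really used. The remaining steps are formal.
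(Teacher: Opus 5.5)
The paper quotes this theorem as background and gives no proof of its own. Your route is the standard one from the literature: take the structure brace on $\Z^{(X)}$, pass to its quotient by the socle, and get uniqueness by a generation argument. Steps 1 and 3 are sound as sketches. Two small additions would make them complete. In Step 1, extending your recursion to negative coefficients needs the bijectivity of $T_X$. In Step 3, the formula $g +' \lambda_z = g\circ\lambda_{g^{-1}(z)}$ does not involve $+'$. Induction on the number of summands then shows that $+'$-sums and $+$-sums of the $\lambda_z$ agree, which is what your last sentence uses.

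There is a genuine error in Step 2, in the bullet proving that $S$ is $\Lambda$-invariant. The identity $\Lambda_{\Lambda_c(a)}=\Lambda_c\Lambda_a\Lambda_c^{-1}$ is false in general. It also does not follow from $\Lambda_c$ permuting the basis; that only gives $\Lambda_c(e_y)=e_{\Lambda_c(y)}$.

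To see that it fails, take $c=e_x$ and $a=e_y$. The identity becomes $\lambda_{\lambda_x(y)}\lambda_x=\lambda_x\lambda_y$. Substituting $y=\sigma_x(w)$ and inverting gives $\sigma_{x\ast w}\sigma_x=\sigma_x\sigma_w$. By \eqref{eq:c2}, the left side equals $\sigma_{w\ast x}\sigma_w$, so you would get $\sigma_{w\ast x}=\sigma_x$ for all $w,x$. That says exactly that $\Ret(X)$ is a trivial cycle set. This fails, for example, for any nontrivial irretractable cycle set, such as Vendramin's squarefree counterexample mentioned in the introduction.

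The conclusion you need is still true, but it comes from normality rather than from this identity. Let $a\in S$ and $c$ be arbitrary, and set $a'=c\circ a\circ c^{-1}$, which lies in $S$ because $S$ is a kernel. Then
\[
c+\Lambda_c(a)=c\circ a=a'\circ c=a'+\Lambda_{a'}(c)=a'+c ,
\]
so $\Lambda_c(a)=a'\in S$. In particular, your identity does hold for $a\in S$, where both sides equal $\id$. With this replacement, the transport of the addition to $\G(X)$ goes through.

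One further minor point: you show the image of $\Lambda$ contains the generators of $\G(X)$. For equality you also need the reverse inclusion. It follows because the recursion writes every $\Lambda_a$ as a product of $\lambda_x^{\pm1}$.
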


 In other words, \cref{thm:GX_is_a_brace} says that $\G(X,\circ)$ can be uniquely equipped with a brace structure such that the map $X \to \G(X)$; $x \mapsto \lambda_x$ is a homomorphism of cycle sets, that is,
 
 \begin{equation} \label{eq:lambda_is_hom_of_cycle_sets}
    \lambda_{x \ast y} = \lambda_{\lambda_x}^{-1}(\lambda_y).
 \end{equation}

 \begin{convention}
     Throughout this article, we will use $\lambda_g$ to denote both the action of an element $g \in \G(X)$ on the set $X$ via the corresponding permutation in $\Sym_X$, and its action on $\G(X)$ by means of the $\lambda$-action. There is no harm in doing so, as it will always be clear which set is acted upon.
 \end{convention}

Using the convention above, \eqref{eq:brace_law_on_GX} is equivalent to $\lambda_{\lambda_x}(\lambda_y) = \lambda_{\lambda_x(y)}$. A repeated application of this law shows that for all $g \in \G(X)$, $x \in X$,
\begin{equation} \label{eq:lambda_g_of_lambda_x}
    \lambda_g(\lambda_y) = \lambda_{\lambda_g(y)}.
\end{equation}

\medskip

\subsection{Cabling}

The cabling machinery is a powerful tool that has been developed by Lebed, Ram\'irez and Vendramin \cite{LRV_Cabling} in order to investigate how the diagonal map of a cycle set influences its decomposition behaviour. We remark that rudiments of cabling can already be found in Dehornoy's work about RC-calculus on cycle sets \cite{Dehornoy_RC}. Recently, Colazzo and Van Antwerpen have developed a noninvolutive analogue \cite{colazzo_van_antwerpen}.

In contrast to Lebed, Ram\'irez and Vendramin, we use the permutation brace instead of the structure brace of a cycle set to set up the cabling machinery. However, it is not difficult to see that the resulting calculi are equivalent.

\begin{defn}
    Let $X = (X,\ast)$ be a cycle set and let $k$ be an integer. The \emph{$k$-cabling} of $X$ is then defined as $X^{[k]} = (X, \subcab{k})$ where
    \[
    x \subcab{k} y = \lambda_{k\lambda_x}^{-1}(y),
    \]
    where the product $k\lambda_x$ is formed with respect to the addition in the permutation brace $\G(X)$.
\end{defn}

\begin{thm} \label{thm:cabling_is_a_cycle_set}
    Let $X$ be a cycle set. Then $X^{[k]}$ is a cycle set for each integer $k$.
\end{thm}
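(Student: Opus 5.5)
We need to check that $x \subcab{k} y = \lambda_{k\lambda_x}^{-1}(y)$ satisfies \eqref{eq:c1}, \eqref{eq:c2} and \eqref{eq:c3}. Axiom \eqref{eq:c1} is immediate, since $\lambda_{k\lambda_x}$ is a permutation of $X$; axiom \eqref{eq:c3} then follows from Rump's theorem that finite cycle sets are automatically nondegenerate. So the whole work lies in \eqref{eq:c2}.

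The plan is to reduce \eqref{eq:c2} to an identity inside the brace $\G(X)$. For $g,h \in \G(X)$, the brace $\G(X)$ carries the cycle set operation $g \ast h = \lambda_g^{-1}(h)$, which satisfies \eqref{eq:c2} by \cite[Lemma 2]{CJO_Braces}. We apply this to the elements $g = k\lambda_x$ and $h = k\lambda_y$. Here we use that each $\lambda_a$ is an additive automorphism, so $\lambda_a(k b) = k\lambda_a(b)$. Combining this with \eqref{eq:lambda_g_of_lambda_x}, for every $a \in \G(X)$ and $y \in X$ we get
\[
\lambda_a(k\lambda_y) = k\lambda_{\lambda_a(y)}.
\]
Applying this with $a = (k\lambda_x)^{-1}$ gives $(k\lambda_x) \ast (k\lambda_y) = k\lambda_{x \subcab{k} y}$. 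In other words, the map $\phi: X \to \G(X)$, $x \mapsto k\lambda_x$, intertwines $\subcab{k}$ with the brace cycle set operation.

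Next we compute $(x \subcab{k} y) \subcab{k} (x \subcab{k} z)$. This equals $\lambda^{-1}_{k\lambda_{x \subcab{k} y}}\lambda^{-1}_{k\lambda_x}(z)$, which by the previous step is
\[
\lambda^{-1}_{(k\lambda_x)\ast(k\lambda_y)}\,\lambda^{-1}_{k\lambda_x}(z) = \bigl( (k\lambda_x) \circ \lambda_{k\lambda_x}^{-1}(k\lambda_y)\bigr)^{-1}(z),
\]
using that $\lambda$ is a homomorphism. The element inside the inverse is $(k\lambda_x) \circ ((k\lambda_x)\ast(k\lambda_y))$, which equals $k\lambda_x + k\lambda_y$ by the brace identity $a \circ \lambda_a^{-1}(b) = a + b$. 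This expression is symmetric in $x$ and $y$, so \eqref{eq:c2} follows.

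The step to be careful with is the identification of $\lambda_{k\lambda_{x\subcab{k}y}}$ with the composite. One must distinguish the element $k\lambda_x \in \G(X)$ acting on $X$ as a permutation from its $\lambda$-action on $\G(X)$. Their compatibility is exactly what \eqref{eq:lambda_g_of_lambda_x} together with additivity of the $\lambda$-action provides. The final symmetric expression also makes clear that no commutativity beyond that of $+$ is needed.
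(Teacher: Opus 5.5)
Your proposal is correct and follows essentially the same route as the paper. Both proofs rewrite $(x \subcab{k} y) \subcab{k} (x \subcab{k} z)$ as $\lambda^{-1}_{k\lambda_x \circ \lambda^{-1}_{k\lambda_x}(k\lambda_y)}(z) = \lambda^{-1}_{k\lambda_x + k\lambda_y}(z)$, using \eqref{eq:lambda_g_of_lambda_x}, additivity of the $\lambda$-action and the homomorphism property of $\lambda$, and then conclude by symmetry in $x$ and $y$. Your appeal to \eqref{eq:c2} for the brace cycle set from \cite{CJO_Braces} is never actually used, since you compute the symmetric expression directly.
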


\begin{proof}
    \eqref{eq:c1} is quickly checked, and as $X$ is finite, we only need to check \cref{eq:c2}. For $x,y,z \in X$, we have
        \begin{align*}
        (x \subcab{k} y) \subcab{k} (x \subcab{k} z) & = \lambda_{k\lambda_x}^{-1}(y) \subcab{k} \lambda_{k\lambda_x}^{-1}(z) \\
        & = \lambda^{-1}_{k\lambda_{k\lambda_x}^{-1}(\lambda_y)} (\lambda_{k\lambda_x}^{-1}(z)) \\
        & = \lambda^{-1}_{k\lambda_x \circ k\lambda_{k\lambda_x}^{-1}(\lambda_y)} (z) \\
        & = \lambda^{-1}_{k\lambda_x \circ \lambda^{-1}_{k\lambda_x}(k\lambda_y)}(z) \\
        & = \lambda^{-1}_{k\lambda_x + k\lambda_y}(z).
    \end{align*}
    As the last expression is symmetric in $x$ and $y$, \cref{eq:c2} is confirmed.
\end{proof}

\begin{thm} \label{thm:cabling_iteration}
    If $X$ is a cycle set with diagonal $T = T_X$, then for each $k \geq 0$ the operations $\subcab{k}$ satisfy the following iteration:
    \begin{align*}
        x \subcab{0} y & = y \\
        x \subcab{k+1} y & = T^k(x) \ast (x \subcab{k} y) \quad (k \geq 0).
    \end{align*}
\end{thm}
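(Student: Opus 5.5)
We need $\lambda^{-1}_{(k+1)\lambda_x}(y) = T^k(x) \ast \lambda^{-1}_{k\lambda_x}(y)$. The plan is to prove an auxiliary identity in the permutation brace by induction on $k$ and then read off the iteration.

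The case $k=0$ is immediate, since $0\lambda_x = 0$ is the identity of $\G(X)$, so $x \subcab{0} y = y$.

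For the inductive step, write $(k+1)\lambda_x = k\lambda_x + \lambda_x$. Using $a + b = a \circ \lambda_a^{-1}(b)$ in a brace, we get
\[
(k+1)\lambda_x = k\lambda_x \circ \lambda^{-1}_{k\lambda_x}(\lambda_x) = k\lambda_x \circ \lambda_{\lambda^{-1}_{k\lambda_x}(x)},
\]
where the second equality is \eqref{eq:lambda_g_of_lambda_x}. Inverting gives
\[
\lambda^{-1}_{(k+1)\lambda_x} = \lambda^{-1}_{\lambda^{-1}_{k\lambda_x}(x)} \circ \lambda^{-1}_{k\lambda_x},
\]
so that
\[
x \subcab{k+1} y = \lambda^{-1}_{k\lambda_x}(x) \ast (x \subcab{k} y).
\]
It therefore suffices to show the auxiliary claim $\lambda^{-1}_{k\lambda_x}(x) = T^k(x)$ for all $k \ge 0$, i.e.\ $x \subcab{k} x = T^k(x)$.

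The auxiliary claim is the main step, and it also goes by induction. The case $k=0$ is trivial. Taking $y = x$ in the identity just derived gives
\[
x \subcab{k+1} x = \lambda^{-1}_{k\lambda_x}(x) \ast \lambda^{-1}_{k\lambda_x}(x) = T\bigl(x \subcab{k} x\bigr) = T\bigl(T^k(x)\bigr) = T^{k+1}(x),
\]
using the induction hypothesis in the third equality. This closes the induction.

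Two points need care. First, the identity $x \subcab{k+1} y = (x \subcab{k} x) \ast (x \subcab{k} y)$ holds for every $k$ independently of the claim, so using it inside the induction for the claim is not circular. Second, the claim $x \subcab{k+1} x = T^{k+1}(x)$ is then substituted back into that identity to obtain the stated iteration. The only delicate bookkeeping is confirming $a + b = a \circ \lambda_a^{-1}(b)$ and the direction of inverses in \eqref{eq:lambda_g_of_lambda_x}; everything else is formal.
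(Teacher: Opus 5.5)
Your proof is correct and follows the paper's approach. The paper simply cites the product representation $k\lambda_x = \lambda_x \circ \lambda_{T(x)} \circ \cdots \circ \lambda_{T^{k-1}(x)}$ from \cite{LRV_Cabling}, and your two inductions, $(k+1)\lambda_x = k\lambda_x \circ \lambda_{x \subcab{k} x}$ and $x \subcab{k} x = T^k(x)$, together prove exactly that formula from the brace axioms, so you have supplied a self-contained proof of what the paper takes from the literature.
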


\begin{proof}
    This follows from the representation $k \lambda_x = \lambda_x \circ \lambda_{T(x)} \circ \lambda_{T^2(x)} \circ \ldots \circ \lambda_{T^{k-1}(x)}$ (\cite[Eq.(2.3)]{LRV_Cabling}).
\end{proof}

\begin{thm} \label{thm:permutation_group_of_k_cabling}
    For each cycle set $X$ and any integer $k$, $\G(X^{[k]}) = k\G(X)$. Moreover, the permutation brace structure on $\G(X^{[k]})$ coincides with the brace structure on $k\G(X)$.
\end{thm}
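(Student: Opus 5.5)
The plan is to work entirely inside the permutation brace $\G(X)$ and to compare generators. By definition, the elements $\sigma^{(k)}_x := \lambda_{k\lambda_x}^{-1}$ for $x \in X$ generate $\G(X^{[k]}) \leq \Sym_X$. Write $\lambda^{(k)}_x = (\sigma^{(k)}_x)^{-1}$ for the corresponding elements of $\G(X^{[k]})$. Then $\lambda^{(k)}_x$ is exactly the permutation of $X$ given by the element $k\lambda_x \in \G(X)$. So $\G(X^{[k]})$ is the subgroup of $(\G(X),\circ)$ generated by the set $\{k\lambda_x : x \in X\}$. The first claim therefore reduces to showing that this multiplicative subgroup equals $k\G(X) = \{kg : g \in \G(X)\}$.

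First I show that $k\G(X)$ is a left ideal of $\G(X)$, and hence a subbrace. It is an additive subgroup, being the image of multiplication by $k$ on the abelian group $(\G(X),+)$. It is $\lambda$-invariant because each $\lambda_a$ is additive, so $\lambda_a(kg) = k\lambda_a(g)$. Since left ideals are subbraces, $k\G(X)$ is closed under $\circ$, and it contains every $k\lambda_x$. This gives the inclusion $\G(X^{[k]}) \subseteq k\G(X)$.

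For the reverse inclusion, it suffices to show that the multiplicative subgroup $H = \langle k\lambda_x : x \in X\rangle_\circ$ is an additive subgroup containing $k\lambda_x$ for all $x$. This is because the $\lambda_x$ generate $(\G(X),+)$: by \eqref{eq:brace_law_on_GX} every product of $\lambda$'s is a sum of $\lambda$'s, and inverses are handled by finiteness. Hence the $k\lambda_x$ generate $k\G(X)$ additively. The key computation is the one in the proof of \cref{thm:cabling_is_a_cycle_set}:
\[
k\lambda_x \circ \lambda^{-1}_{k\lambda_x}(k\lambda_y) = k\lambda_x + k\lambda_y .
\]
Combined with \eqref{eq:lambda_g_of_lambda_x}, this gives $\lambda_{k\lambda_x}^{-1}(k\lambda_y) = k\lambda_{\lambda_{k\lambda_x}^{-1}(y)} = k\lambda_{x\subcab{k}y}$, which lies in $H$. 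So $k\lambda_x + k\lambda_y \in H$. More generally, I show by induction on word length that $H$ is invariant under $\lambda_h$ for $h \in H$. The set $\{k\lambda_z : z \in X\}$ is permuted by each $\lambda_{k\lambda_x}$, since $\lambda_{k\lambda_x}(k\lambda_z) = k\lambda_{\lambda_{k\lambda_x}(z)}$. Hence $\lambda_h(H) = H$ for $h \in H$, and then $h + h' = h \circ \lambda_h^{-1}(h') \in H$. Finiteness gives closure under additive inverses, so $H$ is an additive subgroup containing all $k\lambda_x$, and therefore $H = k\G(X)$.

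For the second claim, I use the uniqueness in \cref{thm:GX_is_a_brace}. The subbrace $k\G(X)$ has multiplicative group $\G(X^{[k]})$ under composition, and its addition satisfies
\[
\lambda^{(k)}_x \circ \lambda^{(k)}_y = k\lambda_x + k\lambda_{\lambda_{k\lambda_x}(y)} = \lambda^{(k)}_x + \lambda^{(k)}_{\lambda^{(k)}_x(y)} .
\]
This again comes from the same identity, rewritten with $y$ replaced by $\lambda_{k\lambda_x}(y)$. This is exactly condition \eqref{eq:brace_law_on_GX} for the cycle set $X^{[k]}$, so by uniqueness the two brace structures coincide.

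The main obstacle is the second step, namely making sure that the multiplicatively generated subgroup is also closed additively. This needs the invariance of the generating set $\{k\lambda_z\}$ under the $\lambda$-action of its own elements, which is where \eqref{eq:lambda_g_of_lambda_x} and the additivity of $\lambda$ are used. Negative $k$ needs no separate argument, since $k\G(X) = (-k)\G(X)$ as sets and the identities above hold for every integer $k$.
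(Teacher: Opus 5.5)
Your outline follows the paper's: compare the generators $k\lambda_x$, then invoke the uniqueness in \cref{thm:GX_is_a_brace}. The inclusion $\G(X^{[k]}) \subseteq k\G(X)$ via the left ideal $k\G(X)$ is fine, and so is your treatment of the brace structure. The gap is exactly at the step you call the main obstacle. From the fact that each $\lambda_{k\lambda_x}$ permutes $S=\{k\lambda_z : z\in X\}$ you conclude that $\lambda_h(H)=H$ for $h\in H$. But $\lambda_h$ is an automorphism of $(\G(X),+)$, not of $(\G(X),\circ)$. Preserving $S$ together with additivity only shows that $\lambda_h$ preserves the additive span $\langle S\rangle_+ = k\G(X)$, and that span is what you are trying to prove equals $H=\langle S\rangle_\circ$. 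An induction on the word length of $h$, via $\lambda_{h\circ h'}=\lambda_h\lambda_{h'}$, only upgrades $\lambda_s(S)=S$ to $\lambda_h(S)=S$. So $\lambda_h(H)=H$ is true but not proved, and with it the additive closure of $H$ is not established.

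The simplest repair avoids that statement altogether. Show by induction on $n$ that every sum $t_1+\dots+t_n$ with $t_i\in S$ lies in $H$, using
\[
t_1+t_2+\dots+t_n = t_1\circ\bigl(\lambda_{t_1}^{-1}(t_2)+\dots+\lambda_{t_1}^{-1}(t_n)\bigr).
\]
The bracket is again a sum of $n-1$ elements of $S$, because $\lambda_{t_1}^{-1}$ is additive and maps $S$ into $S$; this is your own computation $\lambda^{-1}_{k\lambda_x}(k\lambda_y)=k\lambda_{x\subcab{k}y}$. By finiteness every element of $k\G(X)$ is such a sum, so $k\G(X)\subseteq H$. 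This is in substance the paper's argument, which reads off $\langle S\rangle_\circ=\langle S\rangle_+$ (both inclusions at once) from $k\lambda_x\circ k\lambda_y = k\lambda_x+k\lambda_{\lambda_{k\lambda_x}(y)}$.

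If you want to keep the claim $\lambda_h(H)=H$, the induction must run over the length of the argument $w\in H$, not of $h$. It then needs the twisted rule $\lambda_h(s\circ w)=\lambda_h(s)\circ\lambda_g(w)$ with $g=\lambda_h(s)^{-1}\circ h\circ s\in H$. This rule follows from $\lambda_h(s\circ w)=\lambda_h(s)+\lambda_{h\circ s}(w)$ and $t+v=t\circ\lambda_t^{-1}(v)$.
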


\begin{proof}
    As the definition given in \cite{LRV_Cabling} uses the structure brace instead of the permutation brace, we give a short proof.

    Note that for $x,y \in X$,
    \[
    k\lambda_x \circ k\lambda_y = k\lambda_x + \lambda_{k\lambda_x}(k\lambda_y) \overset{\eqref{eq:lambda_g_of_lambda_x}}{=} k\lambda_x + k\lambda_{\lambda_{k\lambda_x}(y)},
    \]
    which implies the equality of subgroups,
    \[
    \G(X^{[k]}) = \langle k\lambda_x : x \in X \rangle_{\circ}  = \langle k\lambda_x : x \in X \rangle_+ = k\langle \lambda_x : x \in X \rangle = k\G(X).
    \]
    We now check that $X^{[k]} \to k\G(X)$, $x \mapsto k\lambda_x$ is a cycle set homomorphism. For $x,y \in X$, we use \eqref{eq:lambda_g_of_lambda_x} to calculate
    \[
    \lambda_{k\lambda_x}^{-1}(k\lambda_y) = k\lambda_{\lambda_{k\lambda_x}^{-1}(y)} = k \lambda_{x \subcab{k} y}.
    \]
    Thus, by \cref{thm:GX_is_a_brace}, the brace structures on $\G(X^{[k]})$ and $k\G(X)$ coincide.
\end{proof}

\begin{defn}
    The \emph{Dehornoy class} $d = d_X$ of a cycle set $X$ is the smallest integer $d \geq 1$ such that $X^{[d]}$ is a trivial cycle set. 
\end{defn}

\begin{thm} \label{thm:dehornoy_class_is_additive_exponent}
    The Dehornoy class $d_X$ of a cycle set $X$ is the exponent of the additive group $(\G(X),+)$.
\end{thm}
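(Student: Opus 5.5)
The plan is to show two divisibility relations: $X^{[k]}$ is trivial if and only if $k\G(X) = 0$. Once this equivalence is in place, the smallest $d \geq 1$ with $X^{[d]}$ trivial is exactly the smallest $d \geq 1$ with $d g = 0$ for all $g \in (\G(X),+)$, which is the exponent of that additive group.

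For the forward direction, suppose $X^{[k]}$ is trivial. Then $x \subcab{k} y = \lambda_{k\lambda_x}^{-1}(y) = y$ for all $x,y \in X$. This means that each permutation $k\lambda_x \in \G(X)$ acts as the identity on $X$. Since $\G(X)$ is by definition a subgroup of $\Sym_X$, this forces $k\lambda_x = 0$ in $\G(X)$, the identity permutation being the common neutral element. By \cref{thm:permutation_group_of_k_cabling}, we have $k\G(X) = \G(X^{[k]}) = \langle k\lambda_x : x\in X\rangle_+$, which is therefore $0$. Equivalently, the elements $\lambda_x$ generate $(\G(X),+)$, as shown in that proof, and each is annihilated by $k$. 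Since the group is abelian, it follows that $kg = 0$ for all $g$.

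For the converse, suppose $k$ annihilates $(\G(X),+)$. Then $k\lambda_x = 0$ for each $x$, so $\lambda_{k\lambda_x} = \id$ and hence $x \subcab{k} y = y$. In other words, $X^{[k]}$ is trivial.

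The only point needing care is the identification of "acts trivially on $X$" with "equals $0$ in $\G(X)$". This holds precisely because we work with the permutation brace rather than the structure brace, where faithfulness would fail. I will also make explicit that $\lambda_{g}$ for $g\in\G(X)$ acting on $X$ is just the permutation $g$ itself. Finiteness of $\G(X)$ guarantees that the exponent exists, so the Dehornoy class is well defined.
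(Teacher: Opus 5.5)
Your proof is correct, but it does not follow the paper, which gives no argument of its own and simply cites \cite[Theorem G]{LRV_Cabling}. There, cabling is built from the structure brace. Applying that theorem here therefore relies on the paper's unproved remark that the structure-brace and permutation-brace cabling calculi are equivalent.

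You instead work directly with the paper's definition $x \subcab{k} y = \lambda_{k\lambda_x}^{-1}(y)$, and with it the statement becomes nearly a reformulation:
\begin{itemize}
\item Since $\G(X)\leq\Sym_X$ acts faithfully on $X$, and the additive and multiplicative identities of a brace coincide, $X^{[k]}$ is trivial if and only if $k\lambda_x = 0$ for all $x \in X$.
\item This holds if and only if $k\G(X) = \G(X^{[k]}) = 0$, by \cref{thm:permutation_group_of_k_cabling}, or equivalently because the $\lambda_x$ generate $(\G(X),+)$.
\item Hence the integers $k \geq 1$ with trivial $k$-cabling are exactly the multiples of the exponent, and the smallest one is the exponent itself.
\end{itemize}

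Your route gives a self-contained proof that needs no translation between the two calculi, and it also shows the Dehornoy class exists. The citation instead connects the paper's Dehornoy class to the original notion defined through the structure group. In that setting one must additionally pass through the projection of the structure brace onto $\G(X)$, precisely because the structure brace does not act faithfully on $X$, which is the point you flag.
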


\begin{proof}
    \cite[Theorem G]{LRV_Cabling}.
\end{proof}

\section{Quasilinear cycle sets} \label{sec:quasilinear_cycle_sets}

In this section, we recall Rump's theory of \emph{quasilinear cycle sets} which were introduced in order to investigate cycle sets with certain invariance properties coming from an abelian group. The material in this section can be found in Rump's article \cite{Rump_Quasilinear}, and as before, we only provide exact references for nontrivial statements.

Recall that a \emph{$\tau$-group} is a pair $(A,\tau)$ where $A$ is an abelian group and $\tau: A \to A$ is a bijection with $\tau(0) = 0$. Using this terminology, one definition of quasilinear cycle sets can be given.

\begin{defn}
    A \emph{quasilinear cycle set} is a $\tau$-group $A = (A,\tau)$ such that $A$ is a cycle set under the binary operation $\subtau: A \times A \to A$
    \begin{equation} \label{eq:def_ast_tau}
    a \subtau b = \tau(b - a) - \tau(-a).
    \end{equation}
\end{defn}

\begin{conventions}
    \begin{enumerate}
        \item If $\tau$ is clear from the context, we write $\ast = \subtau$ for the associated cycle set operation.
        \item All $\tau$-groups, and therefore all quasilinear cycle sets, are considered to be finite.
    \end{enumerate}
\end{conventions}

\begin{rem}
    We note here that \eqref{eq:def_ast_tau} establishes a correspondence between $\tau$-groups with underlying group $A$, and binary operations $A \times A \to A$; $(a,b) \mapsto a \ast b$ satisfying the \emph{quasilinearity} condition $a \cdot (b + c) = a \cdot b + (a-b) \cdot c$ \cite[Proposition 1]{Rump_Quasilinear}.
\end{rem}

An \emph{ideal} of a quasilinear cycle set $A$ is a subgroup $B \leq A$ such that $\tau(a + B) = \tau(a) + B$ for all $a \in A$. Given an ideal, the factor group $\Bar{A} = A/B$ is again a quasilinear cycle set with the $\tau$-map $\Bar{\tau}(a+B) = \tau(a) + B$.

Let $A$ be a quasilinear cycle set. Then its \emph{socle} is the subset
\[
\Soc(A) = \{ b \in A \ : \ \forall a \in A: \tau(a+b) = \tau(a) + \tau(b) \},
\]
and its \emph{fixator} is defined as the subset
\[
\Fix(A) = \{ b \in A \ : \ \forall a \in A: \tau(a+b) = \tau(a) + b \}.
\]

The socle of a quasilinear cycle set is an ideal \cite[Proposition 9]{Rump_Quasilinear}. This is also true for $\Fix(A)$ if $A$ is finite \cite[p.155]{Rump_Quasilinear}. Note that the socle and the fixator of a quasilinear cycle set are not to be confused with the socle and the fixator of a brace \cite{skew_left_nilpotent}.

\begin{thm} \label{thm:socle_and_retraction_of_quasilinear_cycle_set}
    The retraction $\Ret(A)$ of a quasilinear cycle set $A$ is again quasilinear. More precisely, the retraction map $A \twoheadrightarrow \Ret(A)$ is equivalent to the canonical map $A \twoheadrightarrow A/\Soc(A)$.
\end{thm}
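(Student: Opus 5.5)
The plan is to show directly that, for $a,a' \in A$, we have $\sigma_a = \sigma_{a'}$ if and only if $a - a' \in \Soc(A)$. Then I will verify that $\Soc(A)$ is an ideal, so that $A/\Soc(A)$ is again a quasilinear cycle set with $\bar\tau(a+\Soc(A)) = \tau(a)+\Soc(A)$, and that the induced operation on cosets is exactly the retraction operation $[x]\ast[y] = [x\ast y]$.

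\textbf{Step 1 (the equivalence).} Write $\sigma_a(b) = \tau(b-a) - \tau(-a)$. Suppose $\sigma_a = \sigma_{a'}$. Then for all $b$,
\[
\tau(b-a) - \tau(-a) = \tau(b-a') - \tau(-a').
\]
Substitute $c = b - a'$ and put $s = a' - a$. The identity becomes
\[
\tau(c+s) = \tau(c) + \tau(s-a') - \tau(-a')
\]
for all $c \in A$. Putting $c = 0$ and using $\tau(0)=0$ gives $\tau(s) = \tau(s-a') - \tau(-a')$, so $\tau(c+s) = \tau(c) + \tau(s)$ for all $c$. Hence $s \in \Soc(A)$.

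Conversely, let $s = a'-a \in \Soc(A)$. Then
\[
\tau(b-a) = \tau\bigl((b-a') + s\bigr) = \tau(b-a') + \tau(s),
\]
and likewise $\tau(-a) = \tau(-a') + \tau(s)$. Subtracting gives $\sigma_a(b) = \sigma_{a'}(b)$. So the relation $\sim$ has exactly the cosets of $\Soc(A)$ as classes. In particular, this shows $\Soc(A)$ is a subgroup: it is the $\sim$-class of $0$, and it is closed under differences by the same computation, which can also be checked directly from additivity.

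\textbf{Step 2 (ideal property and compatibility).} Here I would either cite that $\Soc(A)$ is an ideal (\cite[Proposition 9]{Rump_Quasilinear}) or reprove it. The direct argument: for $s \in \Soc(A)$, we have $\tau(a+s) = \tau(a) + \tau(s)$, so it suffices to show $\tau(\Soc(A)) \subseteq \Soc(A)$. Since $\tau$ is a bijection and $\tau(\Soc(A))$ has the same size as $\Soc(A)$, the inclusion gives $\tau(a+\Soc(A)) = \tau(a)+\Soc(A)$.

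To get this inclusion I would use the fact that $\sim$ is a congruence (\cite[Lemma 2]{Rump_Decomposition}). From $0 \sim s$ we get
\[
b \ast 0 \sim b \ast s.
\]
Now $b\ast 0 = \tau(-b)-\tau(-b) = 0$, while
\[
b \ast s = \tau(s-b) - \tau(-b) = \tau(s) \quad\text{for } s\in\Soc(A).
\]
So $\tau(s) \sim 0$, which by Step 1 means $\tau(s) \in \Soc(A)$. This settles the ideal property.

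Then the quotient $\tau$-group $A/\Soc(A)$ has operation
\[
\bar a \ast \bar b = \tau(b-a) - \tau(-a) + \Soc(A) = \overline{a \ast b}.
\]
This is precisely the retraction operation, so $\Ret(A) \cong A/\Soc(A)$ as cycle sets via the canonical map. Quasilinearity of $\Ret(A)$ follows because it is the cycle set of the $\tau$-group $(A/\Soc(A), \bar\tau)$.

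I expect the main obstacle to be the ideal property ($\tau$ preserving $\Soc(A)$); the congruence trick above handles it. The rest is routine substitution.
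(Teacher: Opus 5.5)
Your proof is correct, and it takes a different route from the paper, which gives no argument at all. The paper cites Rump's Proposition 12, and it relies on Rump's Proposition 9 for the fact that $\Soc(A)$ is an ideal. Your two steps replace both citations with an elementary, self-contained verification.

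\textbf{Step 1.} You read off $\sigma_a=\sigma_{a'} \iff a'-a\in\Soc(A)$ directly from $\sigma_a(b)=\tau(b-a)-\tau(-a)$. You derive this without assuming $\Soc(A)$ is a subgroup, so obtaining the subgroup property afterwards from symmetry and transitivity of $\sim$ is not circular.

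\textbf{Step 2.} Applying the congruence property to $0\sim s$ gives $b\ast 0\sim b\ast s$. Combined with $b\ast 0=0$ and $b\ast s=\tau(s)$, this yields $\tau(\Soc(A))\subseteq\Soc(A)$, which is a neat way to get the ideal property. You only need the half of the congruence property of the form $x\sim y\Rightarrow z\ast x\sim z\ast y$. That half follows at once from \eqref{eq:c2} written as $\sigma_{z\ast x}\sigma_z=\sigma_{x\ast z}\sigma_x$, by cancelling $\sigma_z$. So you do not actually need to invoke the cited lemma. The finiteness convention is what upgrades the inclusion $\tau(\Soc(A))\subseteq\Soc(A)$ to equality, and hence gives $\tau(a+\Soc(A))=\tau(a)+\Soc(A)$.

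\textbf{Quotient step.} That $(A/\Soc(A),\bar\tau)$ is a cycle set follows from the isomorphism with $\Ret(A)$. It also follows directly: \eqref{eq:c1} holds for every $\tau$-group, and \eqref{eq:c2} passes to surjective homomorphic images.

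\textbf{Comparison.} Your route makes the statement verifiable inside the paper with a few lines of computation. The citation route buys brevity and places the result within Rump's general theory of $\tau$-groups.
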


\begin{proof}
    \cite[Proposition 12]{Rump_Quasilinear}.
\end{proof}

We will later make use of the following observation by Rump \cite[Proposition 5]{Rump_Quasilinear}.

\begin{pro} \label{pro:fix_is_in_socle}
    For each quasilinear cycle set $A$, we have the inclusion $\Fix(A) \subseteq \Soc(A)$. 
\end{pro}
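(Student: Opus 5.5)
The inclusion should follow directly from the definitions, by specializing the defining identity of $\Fix(A)$ at $a = 0$. The only extra input is the normalization $\tau(0) = 0$, which is part of the definition of a $\tau$-group. No cycle set axioms, finiteness, or the ideal property of $\Fix(A)$ or $\Soc(A)$ should be needed.

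Concretely, let $b \in \Fix(A)$, so that $\tau(a+b) = \tau(a) + b$ for all $a \in A$. The plan has two steps.

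\begin{enumerate}
    \item Take $a = 0$ in this identity. This gives $\tau(b) = \tau(0) + b = b$, so every element of the fixator is a fixed point of $\tau$.
    \item For arbitrary $a \in A$, substitute $b = \tau(b)$ into the defining identity. This yields $\tau(a+b) = \tau(a) + b = \tau(a) + \tau(b)$, which is exactly the condition for $b \in \Soc(A)$.
\end{enumerate}

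Hence $\Fix(A) \subseteq \Soc(A)$. There is no real obstacle; the only point requiring care is to recall that $\tau(0) = 0$ holds by definition of a $\tau$-group, which is what makes the first step work.
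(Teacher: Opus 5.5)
Your proposal is correct and follows the paper's proof: the paper also starts from $\tau(a+b) = \tau(a) + b$ (which it writes as $b = (-a) \ast b$), sets $a = 0$ and uses $\tau(0) = 0$ to get $\tau(b) = b$, and substitutes this back to obtain $\tau(a+b) = \tau(a) + \tau(b)$.
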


\begin{proof}
    If $b \in \Fix(A)$, then for all $a \in A$, we have $b = (-a) \ast b = \tau(a+b) - \tau(a)$, implying that $\tau(a+b) = \tau(a)+b$. As $\tau(0) = 0$, this implies $\tau(b) = b$ and thus, $\tau(a+b) = \tau(a) + \tau(b)$. Therefore, $b \in \Soc(A)$.
\end{proof}

Due to \cref{thm:socle_and_retraction_of_quasilinear_cycle_set}, \cref{conjecture:retraction_conjecture_intro} can also be rephrased as follows.

\begin{conjecture} \label{conjecture:retraction_conjecture}
    If $A$ is a quasilinear cycle set with $|A| > 1$, then $\Soc(A) \neq 0$.
\end{conjecture}

In the following sections, we give a full proof of \cref{conjecture:retraction_conjecture}.

\section{The extended permutation group of a quasilinear cycle set} \label{sec:extended_permutation_group}

Our main tool to prove \cref{conjecture:retraction_conjecture} will be the \emph{extended permutation group} of a quasilinear cycle set, which is a matched product of the classical permutation group $\G(A)$ and the group $A$. This will on one hand have the advantage of taking into account the structure of the group $A$ while on the other hand facilitating the investigation of quasilinear cycle sets by means of a permutation group that is transitive, unlike the permutation group $\G(A)$ that is, in general, not transitive for a quasilinear cycle set $A$.

For an abelian group $A$ and an element $a \in A$, we denote the translation by $a$ as $\gamma_a: A \to A$; $b \mapsto a + b$. This defines the regular abelian permutation group
\[
\Gamma_A = \{ \gamma_a : a \in A \} \leq \Sym_A.
\]

Using this notation, the $\sigma$-maps in a quasilinear cycle set can be checked to be expressible as
\begin{equation} \label{eq:express_sigma_by_gamma}
    \sigma_a = \gamma_{\tau(-a)}^{-1} \circ \tau \circ \gamma_a^{-1}
\end{equation}

We now prove a lemma that allows us to rewrite products of elements in $\Gamma_A$ and $\G(A)$.
\begin{lem} \label{lem:interchange_sigma_and_gamma}
    Let $A$ be a quasilinear cycle set. Then for all $a,b \in A$, we have
    \begin{align}
        \sigma_a \circ \gamma_b & = \gamma_{a \ast b} \circ \sigma_{a-b}, \label{eq:switch_sigma_gamma} \\
        \gamma_b \circ \sigma_a & = \sigma_{-\tau^{-1}(\tau(-a)-b)} \circ \gamma_{-a-\tau^{-1}(\tau(-a)-b)}. \label{eq:switch_gamma_sigma}
    \end{align}
\end{lem}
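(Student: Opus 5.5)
The plan is a direct verification for \eqref{eq:switch_sigma_gamma}, followed by deriving \eqref{eq:switch_gamma_sigma} from it through a suitable substitution. No cycle set axioms are needed beyond the definition \eqref{eq:def_ast_tau}. Recall that $\sigma_a(x) = a \ast x = \tau(x-a) - \tau(-a)$, which is equivalent to \eqref{eq:express_sigma_by_gamma}.

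For \eqref{eq:switch_sigma_gamma}, I evaluate both sides at an arbitrary $x \in A$. The left side gives $\sigma_a(x+b) = \tau(x+b-a) - \tau(-a)$. On the right side, $a \ast b = \tau(b-a) - \tau(-a)$ and $\sigma_{a-b}(x) = \tau(x-a+b) - \tau(b-a)$. Adding these, the terms $\pm\tau(b-a)$ cancel and we obtain exactly $\tau(x+b-a) - \tau(-a)$. This is a routine check.

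For \eqref{eq:switch_gamma_sigma}, I read \eqref{eq:switch_sigma_gamma} in reverse. Given $a,b$, I look for $a',b'$ such that $a'-b' = a$ and $a' \ast b' = b$; then \eqref{eq:switch_sigma_gamma} applied to $(a',b')$ yields $\gamma_b \circ \sigma_a = \sigma_{a'} \circ \gamma_{b'}$.

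Substituting $b' = a' - a$, the second condition becomes $a' \ast (a'-a) = \tau(-a) - \tau(-a') = b$. Hence $\tau(-a') = \tau(-a) - b$. Since $\tau$ is a bijection, this has the unique solution $a' = -\tau^{-1}(\tau(-a)-b)$, and then $b' = a' - a = -a - \tau^{-1}(\tau(-a)-b)$. These are precisely the indices appearing in \eqref{eq:switch_gamma_sigma}.

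The only point requiring care is this inversion step, namely solving for $a'$. It works because $\tau$ is bijective, so there is no real obstacle. The rest is bookkeeping of signs.
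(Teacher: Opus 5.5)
Your proof is correct. For \eqref{eq:switch_sigma_gamma} you do the paper's computation, only pointwise instead of through the factorization \eqref{eq:express_sigma_by_gamma}.

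For \eqref{eq:switch_gamma_sigma} your route is different. The paper checks it directly: it writes $\gamma_b\circ\sigma_a = \gamma_{\tau(-a)-b}^{-1}\circ\tau\circ\gamma_a^{-1}$ and splits $\gamma_a^{-1} = \gamma_{\tau^{-1}(\tau(-a)-b)}\circ\gamma_{-a-\tau^{-1}(\tau(-a)-b)}$. It then recognizes $\gamma_{\tau(-a)-b}^{-1}\circ\tau\circ\gamma_{\tau^{-1}(\tau(-a)-b)}$ as $\sigma_{-\tau^{-1}(\tau(-a)-b)}$. You instead deduce it from \eqref{eq:switch_sigma_gamma} by solving $a'-b'=a$, $a'\ast b'=b$ for $(a',b')$.

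Your reduction shows that the second identity is the first one read from right to left. It also shows that the strange-looking indices are the unique solution of that system: the map $(a',b')\mapsto(a'-b',a'\ast b')$ is a bijection of $A\times A$ because $\tau$ is. The paper's computation is self-contained, parallel to the first, and never refers back to \eqref{eq:switch_sigma_gamma}. Both arguments use nothing beyond the formula \eqref{eq:def_ast_tau} and the bijectivity of $\tau$.
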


\begin{proof}
    Let $a,b \in A$. We use \eqref{eq:express_sigma_by_gamma} to calculate
    \begin{align*}
        \sigma_a \circ \gamma_b & = \gamma_{\tau(-a)}^{-1} \circ \tau \circ \gamma_{a-b}^{-1} \\
        & = \gamma_{\tau(-a)-\tau(-(a-b))}^{-1} \circ \gamma_{\tau(-(a-b))}^{-1} \circ  \tau \circ \gamma_{a-b}^{-1} \\
        & = \gamma_{\tau(-a)-\tau(-(a-b))}^{-1} \circ \sigma_{a-b} \\
        & = \gamma_{a \ast b} \circ \sigma_{a-b}.
    \end{align*}
    On the other hand,
    \begin{align*}
        \gamma_b \circ \sigma_a & =  \gamma_{\tau(-a)-b}^{-1} \circ \tau \circ \gamma_a^{-1} \\
        & = \gamma_{\tau(-a)-b}^{-1} \circ \tau \circ \gamma_{\tau^{-1}(\tau(-a)-b)}  \circ \gamma_{-a-\tau^{-1}(\tau(-a)-b)} \\
        & = \sigma_{-\tau^{-1}(\tau(-a)-b)} \circ \gamma_{-a-\tau^{-1}(\tau(-a)-b)}.
    \end{align*}
\end{proof}

We can now show that $\G(A)$ and $\Gamma_A$ are factors of a matched product.

\begin{thm} \label{thm:tilde_G_is_a_group}
    If $A$ is a quasilinear cycle set, then $\Tilde{\G}(A) = \Gamma_A \circ \G(A)$ is a transitive subgroup of $\Sym_A$. Furthermore, $\Gamma_A \cap \G(A) = 0$ and $\Gamma_A \circ \G(A) = \G(A) \circ \Gamma_A$.
\end{thm}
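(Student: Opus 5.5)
The plan is to use \cref{lem:interchange_sigma_and_gamma} to show that $\Gamma_A$ and $\G(A)$ permute as subgroups of $\Sym_A$. The standard fact is that a product $HK$ of two subgroups is a subgroup if and only if $HK = KH$. Once this holds, transitivity comes for free: $\Gamma_A \subseteq \Tilde{\G}(A)$, and $\Gamma_A$ is already regular on $A$.

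First, I will show $\G(A) \circ \Gamma_A \subseteq \Gamma_A \circ \G(A)$. By \eqref{eq:switch_sigma_gamma}, $\sigma_a \circ \gamma_b = \gamma_{a \ast b} \circ \sigma_{a-b}$, so each generator $\sigma_a$ of $\G(A)$ can be moved past a translation to the right, leaving a translation on the left. Since $A$ is finite, $\G(A)$ is generated as a monoid by the $\sigma_a$, because inverses are positive powers. For a word $g = \sigma_{a_1} \circ \cdots \circ \sigma_{a_n}$ I induct on $n$: apply \eqref{eq:switch_sigma_gamma} with the rightmost letter, then continue leftward. This gives $g \circ \gamma_b = \gamma_{b'} \circ g'$ with $g' \in \G(A)$.

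The reverse inclusion $\Gamma_A \circ \G(A) \subseteq \G(A) \circ \Gamma_A$ follows in the same way from \eqref{eq:switch_gamma_sigma}, which writes $\gamma_b \circ \sigma_a$ as an element of $\G(A) \circ \Gamma_A$. Alternatively, it follows by taking inverses: $(\G(A)\circ\Gamma_A)^{-1} = \Gamma_A \circ \G(A)$, and an inclusion between these two finite sets of equal size $|\Gamma_A||\G(A)|/|\Gamma_A\cap\G(A)|$ is an equality. So $\Gamma_A\circ\G(A) = \G(A)\circ\Gamma_A$, and this set is closed under composition and inversion, hence a subgroup.

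For $\Gamma_A \cap \G(A) = 0$, I need that every element of $\G(A)$ fixes $0$, since a translation fixing $0$ is the identity. Here $\sigma_a(0) = a \ast 0 = \tau(-a) - \tau(-a) = 0$, so all generators, and hence all of $\G(A)$, fix $0$. This settles the intersection.

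The main obstacle is bookkeeping in the commutation step, namely making sure generators (not inverses) suffice. Finiteness handles this, and the rest is routine.
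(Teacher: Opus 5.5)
Your proposal is correct and follows essentially the paper's proof. The paper likewise uses \cref{lem:interchange_sigma_and_gamma} to get $\Sigma_A \circ \Gamma_A = \Gamma_A \circ \Sigma_A$, then passes to $\G(A)\circ\Gamma_A = \Gamma_A\circ\G(A)$ because the finite group $\G(A)$ is generated as a monoid by the $\sigma_a$; it gets transitivity from $\Gamma_A$ and $\Gamma_A \cap \G(A) = 0$ from $\sigma_a(0) = 0$. One small simplification: inverting $\G(A)\circ\Gamma_A \subseteq \Gamma_A\circ\G(A)$ already gives $\Gamma_A\circ\G(A) \subseteq \G(A)\circ\Gamma_A$, so the counting argument is not needed.
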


\begin{proof}
    Let $\Sigma_A = \{ \sigma_a : a \in A\}$. Using \cref{lem:interchange_sigma_and_gamma}, we obtain the equality of sets 
    \begin{equation} \label{eq:Sigma_A_Gamma_A}
        \Sigma_A \circ \Gamma_A = \Gamma_A \circ \Sigma_A.
    \end{equation}
    As the group $\G(A)$ is finite, it is generated by $\Sigma(A)$ as a monoid, so \eqref{eq:Sigma_A_Gamma_A} shows that 
    \[
    \G(A) \circ \Gamma_A = \langle \Sigma_A \rangle \circ \langle \Gamma_A \rangle = \langle \Gamma_A \cup \Sigma_A \rangle = \langle \Gamma_A \rangle \circ \langle \Sigma_A \rangle = \Gamma_A \circ \G(A) = \Tilde{\G}(A).
    \]
    This proves that $\Tilde{\G}(A)$ is indeed a subgroup of $\Sym_A$. As $\Tilde{\G}(A)$ contains the transitive subgroup $\Gamma_A$, the group $\Tilde{\G}(A)$ itself is transitive.
    
    As $\sigma_a(0) = \tau(-a) - \tau(-a) = 0$ for all $a \in A$, we infer that $\G(A)$ fixes the element $0 \in A$. As $\gamma_0$ is the only element of $\Gamma_A$ fixing $0$, it follows that $\Gamma_A \cap \G(A) = 0$.
\end{proof}

\begin{defn}
    The \emph{extended permutation group} of a quasilinear cycle set $A$ is the permutation group
    \[
    \Tilde{\G}(A) = \Gamma_A \circ \G(A) \leq \Sym_A.
    \]
\end{defn}

We now show that, like the permutation group of a cycle set, the extended permutation group of a quasilinear cycle set is also solvable.

In order to do so, we first investigate the behaviour of quasilinear cycle sets under cabling:

\begin{pro} \label{pro:cabling_of_quasilinear_cycle_sets}
    Let $A = (A,\tau)$ be a quasilinear cycle set and $k \geq 0$. Then for all $a,b \in A$, we have
    \begin{equation} \label{eq:cable_tau}
        a \subcab{k} b = a \subtaucab{k} b
    \end{equation}
    In particular, $(A,\tau^k)$ is a quasilinear cycle set.
\end{pro}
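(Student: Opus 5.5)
We argue by induction on $k$, using the iteration of \cref{thm:cabling_iteration}. We want to show that $a \subcab{k} b = \tau^k(b-a) - \tau^k(-a)$ for all $k \geq 0$. For $k=0$ the left side is $b$ by \cref{thm:cabling_iteration}, and the right side is $(b-a) - (-a) = b$, since $\tau^0 = \id_A$.

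The first thing to compute is the diagonal map of $A$. We have $T(a) = a \ast a = \tau(0) - \tau(-a) = -\tau(-a)$. Writing $\iota(a) = -a$, this says $T = \iota \circ \tau \circ \iota$, and therefore $T^k(a) = -\tau^k(-a)$.

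For the inductive step, assume the formula holds for $k$ and put $c = a \subcab{k} b = \tau^k(b-a) - \tau^k(-a)$. By \cref{thm:cabling_iteration},
\[
a \subcab{k+1} b = T^k(a) \ast c = \tau\bigl(c - T^k(a)\bigr) - \tau\bigl(-T^k(a)\bigr).
\]
Now $-T^k(a) = \tau^k(-a)$, so
\[
c - T^k(a) = \tau^k(b-a) - \tau^k(-a) + \tau^k(-a) = \tau^k(b-a).
\]
Hence $a \subcab{k+1} b = \tau^{k+1}(b-a) - \tau^{k+1}(-a)$, which is $a \subtaucab{k+1} b$ and completes the induction.

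For the final claim, $(A,\tau^k)$ is a $\tau$-group: $\tau^k$ is a bijection with $\tau^k(0)=0$. By \eqref{eq:cable_tau}, its operation $\subtaucab{k}$ equals $\subcab{k}$. By \cref{thm:cabling_is_a_cycle_set}, $(A, \subcab{k})$ is a cycle set, so $(A,\tau^k)$ is a quasilinear cycle set.

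The only step that needs care is the diagonal identity $T^k(a) = -\tau^k(-a)$, which makes the cancellation in the inductive step work. Everything else is direct substitution. Note that the statement is for $k \geq 0$, so \cref{thm:cabling_iteration} applies throughout and negative $k$ need not be treated.
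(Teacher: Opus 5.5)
Your proof is correct and follows the paper's argument: you compute the diagonal $T(a) = -\tau(-a)$, deduce $T^k(a) = -\tau^k(-a)$, and induct via \cref{thm:cabling_iteration} with the same cancellation. You also spell out the ``in particular'' clause via \cref{thm:cabling_is_a_cycle_set}, which the paper leaves implicit.
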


\begin{proof}
    Write $T = T_A$ for the diagonal map of the cycle set $A$. Then $T(a) = (\tau(a-a)) - \tau(-a) = -\tau(-a)$. A standard induction then shows that $T^k(a) = -\tau^k(-a)$ for $k \geq 0$.

    We now prove \cref{eq:cable_tau} by induction. Note that $\tau^0 = \id_A$, so $a \subtaucab{0} b = (b-a)-(-a) = b = a \subcab{0} b$. Now assume that $a \subcab{k} b = a \subtaucab{k} b$ holds for all $a,b \in A$ where $k \geq 0$ is fixed, then using \cref{thm:cabling_iteration}, we calculate
    \begin{align*}
        a \subcab{k+1} b & = T^k(a) \ast (a \subcab{k} b) \\
        & = (-\tau^k(-a)) \ast (\tau^k(b-a) - \tau^k(-a)) \\ 
        & = \tau((\tau^k(b-a) - \tau^k(-a)) - (-\tau^k(-a))) - \tau(\tau^k(-a)) \\
        & = \tau^{k+1}(b-a) - \tau^{k+1}(-a) = a \subtaucab{k+1} b. 
    \end{align*}
    By induction, this proves \eqref{eq:cable_tau} for all $k \geq 0$.
\end{proof}

A consequence of \cref{pro:cabling_of_quasilinear_cycle_sets} is that the Dehornoy class of a quasilinear cycle is determined by its $\tau$-map in a simple way.

\begin{cor} \label{cor:dehornoy_class_is_order_of_tau}
    The Dehornoy class $d_A$ of a quasilinear cycle set $A = (A,\tau)$ coincides with $o(\tau)$, the order of $\tau$ as a permutation.
\end{cor}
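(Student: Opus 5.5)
By the definition of the Dehornoy class, $d_A$ is the smallest integer $d \geq 1$ such that $A^{[d]}$ is a trivial cycle set. The plan is to show that, for every $k \geq 1$, the cabling $A^{[k]}$ is trivial if and only if $\tau^k = \id_A$. The minimal such $k$ is then $o(\tau)$ on both sides. (One could also route through \cref{thm:dehornoy_class_is_additive_exponent}, but the direct argument via cabling is shorter.)

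First, by \cref{pro:cabling_of_quasilinear_cycle_sets}, for all $a,b \in A$ and $k \geq 0$ we have
\[
a \subcab{k} b = a \subtaucab{k} b = \tau^k(b-a) - \tau^k(-a).
\]
Hence $A^{[k]}$ is trivial precisely when
\[
\tau^k(b-a) - \tau^k(-a) = b \quad \text{for all } a,b \in A.
\]

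If $\tau^k = \id_A$, the left-hand side equals $(b-a)-(-a) = b$, so $A^{[k]}$ is trivial.

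Conversely, suppose $A^{[k]}$ is trivial. Specialize to $a = 0$ and use $\tau^k(0) = 0$, which holds because $\tau(0)=0$. The condition then gives $\tau^k(b) = b$ for all $b \in A$, that is, $\tau^k = \id_A$.

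Since $A$ is finite and $\tau$ is a bijection, $\tau$ has a finite order $o(\tau)$. Therefore the smallest $d \geq 1$ with $A^{[d]}$ trivial coincides with the smallest $d \geq 1$ with $\tau^d = \id_A$, which is $o(\tau)$.

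There is no real obstacle here. The only point needing care is that \cref{pro:cabling_of_quasilinear_cycle_sets} is stated for $k \geq 0$, which is all the Dehornoy class requires, since it only involves positive $k$.
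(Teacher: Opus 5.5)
Your proposal is correct and follows the paper's own argument: both use \cref{pro:cabling_of_quasilinear_cycle_sets} to reduce the triviality of $A^{[k]}$ to the condition $\tau^k = \id_A$. The only difference is that you spell out this equivalence explicitly (via the specialization $a = 0$ and $\tau^k(0)=0$), whereas the paper states it without proof.
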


\begin{proof}
    A quasilinear cycle set $A$ is trivial if and only if $\tau = \id_A$. Using \cref{pro:cabling_of_quasilinear_cycle_sets}, we see that $k = o(\tau)$ is the smallest integer such that $a \subcab{k} b = a \subtaucab{k} b = b$ for all $a,b \in A$. Therefore, $o(\tau) = d_A$.
\end{proof}

Suppose that $n \geq 1$ is an integer and $p$ is a prime. Then it is well known that there is a unique factorization $n = p^v \cdot m$ with $v \geq 0$ and $p \nmid m$. We define $n_p = p^v$ and $n_{p'} = m$.

\begin{pro} \label{pro:sylows_from_cabling}
    Let $A$ be a quasilinear cycle set with Dehornoy class $d = d_A$. Then the $p$-primary component of the brace $\G(A)$ is given by
    \begin{equation} \label{eq:sylows_from_cabling}
        \G(A)_p = \G(A^{[d_{p'}]}).
    \end{equation}
    On the other hand, the $p'$-primary component of $\G(A)$ is given by
    \begin{equation} \label{eq:sylows_from_cabling_complement}
        \G(A)_{p'} = \G(A^{[d_p]}).
    \end{equation}
\end{pro}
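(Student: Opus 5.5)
By \cref{thm:permutation_group_of_k_cabling}, $\G(A^{[k]}) = k\G(A)$ for every integer $k$, with the brace structures agreeing. So the statement reduces to a claim about the finite abelian group $(\G(A),+)$: we must show $d_{p'}\G(A) = \G(A)_p$ and $d_p\G(A) = \G(A)_{p'}$. By \cref{thm:dehornoy_class_is_additive_exponent}, $d = d_A$ is the exponent of $(\G(A),+)$; equivalently, by \cref{cor:dehornoy_class_is_order_of_tau}, $d = o(\tau)$. Only the first description is needed here.

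Let $G = (\G(A),+)$, a finite abelian group of exponent $d = d_p d_{p'}$. It decomposes as $G = G_p \oplus G_{p'}$, where $G_p$ has exponent $d_p$ and $G_{p'}$ has exponent dividing $d_{p'}$. Multiplication by $d_{p'}$ acts as follows on the two summands.

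\begin{enumerate}
    \item On $G_{p'}$ it is zero, since the exponent of $G_{p'}$ divides $d_{p'}$.
    \item On $G_p$ it is an automorphism, since $\gcd(d_{p'}, p) = 1$ and $G_p$ is a $p$-group.
\end{enumerate}

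Hence $d_{p'}G = d_{p'}G_p + d_{p'}G_{p'} = G_p + 0 = G_p$, which is \eqref{eq:sylows_from_cabling}. Symmetrically, multiplication by $d_p$ kills $G_p$ and is invertible on $G_{p'}$, because $p \nmid |G_{p'}|$. This gives $d_pG = G_{p'}$, which is \eqref{eq:sylows_from_cabling_complement}.

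The only point needing care is that the $k$-cabling in \cref{thm:permutation_group_of_k_cabling} is taken with respect to the addition of the permutation brace, so that $k\G(A)$ is the subgroup $\{kg : g \in \G(A)\}$ of the additive group. This is exactly how the theorem is stated. No genuine obstacle is expected: the argument is an elementary computation with primary components, and quasilinearity is used only in that the Dehornoy class, hence the exponent, is well defined and finite.
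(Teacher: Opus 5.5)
Your proposal is correct and follows the paper's proof. Both combine \cref{thm:permutation_group_of_k_cabling} with the fact that $d$ is the exponent of $(\G(A),+)$ (\cref{thm:dehornoy_class_is_additive_exponent}); you simply prove the elementary identities $B_p = n_{p'}B$ and $B_{p'} = n_pB$ for a finite abelian group $B$ of exponent $n$, which the paper quotes as well known.
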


\begin{proof}
    It is well-known that if $B$ is a finite abelian group of exponent $n$, then its $p$-primary component is given by $B_p = n_{p'}B$ and its $p'$-primary component is given by $B_{p'} = n_pB$. As $d$ is the exponent of $(\G(A),+)$ (\cref{thm:dehornoy_class_is_additive_exponent}), it follows from \cref{thm:permutation_group_of_k_cabling} and \cref{pro:cabling_of_quasilinear_cycle_sets} that
    \[
    \G(A)_p = d_{p'}\G(A) = \G(A^{[d_{p'}]})
    \]
    and
    \[
    \G(A)_{p'} = d_p\G(A) = \G(A^{[d_p]}).
    \]
\end{proof}

\begin{thm} \label{thm:tilde_G_is_solvable}
    $\Tilde{\G}(A)$ is solvable for each quasilinear cycle set $A$.
\end{thm}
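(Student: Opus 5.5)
The plan is to use P.~Hall's characterization of finite solvable groups: a finite group is solvable if and only if it has a $p$-complement (a subgroup of $p$-power index and order prime to $p$) for every prime $p$. By \cref{thm:tilde_G_is_a_group} we have $\Gamma_A \cap \G(A) = 0$ and $\Tilde{\G}(A) = \Gamma_A \circ \G(A)$, so
\[
|\Tilde{\G}(A)| = |A| \cdot |\G(A)|.
\]
Write $\G(A)_{p'}$ for the $p'$-primary component of the permutation brace. This is a Hall $p'$-subgroup of $(\G(A),\circ)$, since $|\G(A)| = |\G(A)_p|\cdot|\G(A)_{p'}|$ and left ideals are subbraces. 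Similarly, write $A_{p'}$ for the $p'$-primary component of $A$. The natural candidate for a $p$-complement of $\Tilde{\G}(A)$ is then
\[
H_p = \Gamma_{A_{p'}} \circ \G(A)_{p'},
\]
which has order $|A|_{p'}\,|\G(A)|_{p'}$. Hence it is enough to show that $H_p$ is a subgroup.

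First, I would use \cref{pro:sylows_from_cabling} to identify $\G(A)_{p'} = \G(A^{[d_p]})$. By \cref{pro:cabling_of_quasilinear_cycle_sets}, $A^{[d_p]}$ is again a quasilinear cycle set, namely $(A,\tau^{d_p})$, and its generators are the maps $\sigma'_a = \gamma_{\tau^{d_p}(-a)}^{-1}\circ\tau^{d_p}\circ\gamma_a^{-1}$. Applying \cref{thm:tilde_G_is_a_group} to this cabled cycle set shows that $\Gamma_A \circ \G(A)_{p'}$ is a subgroup. It remains to cut $\Gamma_A$ down to $\Gamma_{A_{p'}}$. For this I would use the interchange rules of \cref{lem:interchange_sigma_and_gamma} for $\tau' = \tau^{d_p}$:
\[
\sigma'_a\gamma_b = \gamma_{a\ast' b}\,\sigma'_{a-b}.
\]
Closure of $H_p$ then reduces to the following claim: for $b \in A_{p'}$ and every $g \in \G(A)_{p'}$, we have $g\gamma_b g^{-1} \in \Gamma_{A_{p'}}\G(A)_{p'}$. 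Equivalently, the translation part produced by moving $\gamma_b$ past any word in the $\sigma'_a$ stays in $A_{p'}$.

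To prove the claim, note that the translation parts arise from the map $b \mapsto g(b)$, since $g\gamma_b = \gamma_{g(b)}\, g''$ with $g''$ fixing $0$. So it suffices to show that $\G(A)_{p'}$ maps $A_{p'}$ into $A_{p'}$. Here I would exploit that $\tau^{d_p}$ has order $d_{p'}$, which is prime to $p$ by \cref{cor:dehornoy_class_is_order_of_tau}. Combined with quasilinearity, $a \ast'(b+c) = a\ast' b + (a-b)\ast' c$, this should show that multiplying by $|A|_{p'}$ commutes suitably with the action. Concretely, induction on $n$ gives
\[
a\ast'(nb) = \sum_{i=0}^{n-1} (a-ib)\ast' b,
\]
which I would use to control additive orders of images.

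The main obstacle is exactly this invariance of $A_{p'}$ (or of a suitable replacement subgroup) under $\G(A)_{p'}$. If it fails as stated, my fallback is the symmetric construction: take $\Gamma_{A_p}\circ\G(A)_p$ as a Sylow $p$-subgroup candidate. I would then prove solvability by induction on $|A|$, using a minimal normal subgroup of $\Tilde{\G}(A)$ together with Kegel–Wielandt, since $\Tilde{\G}(A)$ factors as an abelian group times the solvable group $\G(A)$, the latter being solvable as the multiplicative group of a brace.
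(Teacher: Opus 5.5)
Your reduction is correct as far as it goes. Since $\Gamma_A\circ\G(A)_{p'}$ is a group (by \cref{thm:tilde_G_is_a_group} applied to $A^{[d_p]}$), the set $H_p=\Gamma_{A_{p'}}\circ\G(A)_{p'}$ is a subgroup if and only if $\G(A)_{p'}$ maps $A_{p'}$ into itself, i.e.\ if and only if $A_{p'}$ is an ideal of the quasilinear cycle set $(A,\tau^{d_p})$. Hall's criterion would then finish the proof.

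The gap is that this invariance carries the entire weight of the argument, and you do not prove it. The identity $a\ast'(nb)=\sum_{i=0}^{n-1}(a-ib)\ast' b$, taken with $n=o_+(b)$, only says that $x\ast' b$ summed over the coset $a+\langle b\rangle$ vanishes. It says nothing about the additive order of a single term $a\ast' b$, and you do not indicate how $p\nmid o(\tau^{d_p})$ would enter.

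The invariance is in fact true, but the natural way to see it is a posteriori. Once $L=\Gamma_A\circ\G(A)_{p'}$ is known to be solvable, the $p'$-group $\G(A)_{p'}$ lies in a Hall $p'$-subgroup $K$ of $L$. Dedekind's law gives $K=(K\cap\Gamma_A)\circ\G(A)_{p'}$, and counting orders forces $K\cap\Gamma_A=\Gamma_{A_{p'}}$. So your key step presupposes essentially what is to be proved.

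The fallback does not repair this. Kegel--Wielandt needs both factors nilpotent, and an abelian subgroup times a solvable one need not be solvable: $\mathfrak{A}_5=\mathfrak{A}_4\circ C_5$. The proposed induction via a minimal normal subgroup of $\Tilde{\G}(A)$ is also not set up. Before solvability is known, such a subgroup need not be abelian, and you do not say which quotient of $A$ you would pass to.

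The missing idea, which you were one step away from, is to split $\G(A)$ into all of its Sylow subgroups rather than into a $p$-part and a $p'$-part. By \cref{pro:sylows_from_cabling}, $\G(A)_q=\G(A^{[d_{q'}]})$ for every prime $q$. So exactly the argument you used for $\G(A)_{p'}$, applying \cref{thm:tilde_G_is_a_group} to the quasilinear cabling $(A,\tau^{d_{q'}})$, gives $\Gamma_A\circ\G(A)_q=\G(A)_q\circ\Gamma_A$. Since the $\G(A)_q$ are left ideals, $\G(A)_q\circ\G(A)_r=\G(A)_q+\G(A)_r=\G(A)_r\circ\G(A)_q$, and $\G(A)=\prod_q\G(A)_q$.

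Thus $\Tilde{\G}(A)$ is a product of pairwise permutable nilpotent subgroups, namely $\Gamma_A$ and the $\G(A)_q$. Kegel's theorem on such products then yields solvability directly, with no invariance statement about $A_{p'}$ needed. This is exactly the paper's proof.
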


Before going into the proof of this theorem, we recall that for a group $G$, two subgroups $A,B \leq G$ are said to be \emph{permutable} if $A \circ B = B \circ A$.

\begin{proof}
    Let $d = d_A$ be the Dehornoy class of $A$ and let $\mathcal{P}$ be the set of prime divisors of $|\G(A)|$. By \cref{pro:sylows_from_cabling}, we have $\G(A)_p = \G(A^{[d_{p'}]})$ for all $p \in \mathcal{P}$. As $\G(A)_p$ is a left ideal in $\G(A)$, we see that for any $p,q \in \mathcal{P}$,
    \[
    \G(A)_p \circ \G(A)_q = \G(A)_p + \G(A)_q = \G(A)_q + \G(A)_p = \G(A)_q \circ \G(A)_p.
    \]
    Therefore, any two subgroups $\G(A)_p,\G(A)_q \leq \G(A)$ are permutable. Now \cref{thm:tilde_G_is_a_group} additionally shows that for all $p \in \mathcal{P}$,
    \[
    \G(A)_p \circ \Gamma_A = \G(A^{[d_{p'}]}) \circ \Gamma_A = \Gamma_A \circ \G(A^{[d_{p'}]}) = \Gamma_A \circ \G(A)_p.
    \]
    Therefore, the subgroups $\G(A)_p \leq \Tilde{\G}(A)$ ($p \in \mathcal{P}$) and $\Gamma_A \leq \Tilde{\G}(A)$ are mutually permutable.

    Using the fact that the $\G(A)_p$ are left ideals of $\G(A)$, we see that
    \[
    \Tilde{\G}(A) = \Gamma_A \circ \G(A) = \Gamma_A \circ \bigoplus_{p \in \mathcal{P}} \G(A)_p = \Gamma_A \circ \prod_{p \in \mathcal{P}} \G(A)_p.
    \]
    As $\Tilde{\G}(A)$ is generated by the mutually permutable nilpotent subgroups  $\G(A)_p$ ($p \in \mathcal{P}$) and $\Gamma_A$, it follows from the Kegel-Wielandt theorem \cite[Satz 1]{Kegel_Produkte} that $\Tilde{\G}(A)$ is solvable.
\end{proof}

We note that \cref{thm:tilde_G_is_solvable} is already sufficient to prove a special case of \cref{conjecture:retraction_conjecture}.

\begin{cor}
    Let $p$ be a prime and let $(\Z_p,\tau)$ be a quasilinear cycle set structure on $\Z_p$. Then there is an $\alpha \in \Z_p\setminus \{ 0 \}$ such that $\tau(a) = \alpha \cdot a$ for all $a \in \Z_p$.
\end{cor}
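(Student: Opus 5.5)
The plan is to combine \cref{thm:tilde_G_is_solvable} with Galois' classical theorem on solvable transitive permutation groups of prime degree. Write $A = (\Z_p,\tau)$. By \cref{thm:tilde_G_is_a_group}, $\Tilde{\G}(A) = \Gamma_A \circ \G(A)$ is a transitive subgroup of $\Sym_A$ of degree $p$, and by \cref{thm:tilde_G_is_solvable} it is solvable.

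First I locate a Sylow $p$-subgroup. By \cref{thm:tilde_G_is_a_group}, $\G(A)$ fixes $0$, so $\G(A)$ embeds in the symmetric group on the $p-1$ points of $A \setminus \{0\}$. Hence $|\G(A)|$ divides $(p-1)!$ and is coprime to $p$. Since $\Gamma_A \cap \G(A) = 0$, we get $|\Tilde{\G}(A)| = p \cdot |\G(A)|$. Therefore $\Gamma_A \cong \Z_p$ is a Sylow $p$-subgroup of $\Tilde{\G}(A)$.

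Next I invoke Galois' theorem: a solvable transitive group of prime degree $p$ has a normal (indeed unique) Sylow $p$-subgroup. This is the key external input and the main point to cite carefully. Alternatively, one can argue directly: a minimal normal subgroup $N$ of the solvable transitive group is elementary abelian. Its orbits form a block system, so $N$ is transitive, hence regular of order $p$. Thus $N$ is a Sylow $p$-subgroup, and since it is normal it is the unique one, so $N = \Gamma_A$. Either way $\Gamma_A \trianglelefteq \Tilde{\G}(A)$, so $\Tilde{\G}(A)$ lies in the normalizer of $\Gamma_A$ in $\Sym_A$. That normalizer is the holomorph $\Gamma_A \rtimes \Aut(\Z_p)$, i.e.\ the group of affine maps $x \mapsto \alpha x + \beta$ with $\alpha \in \Z_p\setminus\{0\}$, taken with respect to the given group structure on $A$. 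Concretely, if $\pi \in \Sym_A$ normalizes $\Gamma_A$, then $\pi \circ \gamma_1 \circ \pi^{-1} = \gamma_\alpha$ for some $\alpha$, and $x \mapsto \pi(x) - \pi(0)$ is then additive.

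Finally, by \eqref{eq:express_sigma_by_gamma} with $a = 0$ and using $\tau(0)=0$, we have $\sigma_0 = \tau$. So $\tau \in \G(A) \subseteq \Tilde{\G}(A)$, and hence $\tau$ is affine: $\tau(a) = \alpha a + \beta$. From $\tau(0) = 0$ we get $\beta = 0$, and bijectivity of $\tau$ forces $\alpha \neq 0$, which proves the corollary. The only nontrivial step is the normality of the Sylow $p$-subgroup, which I would handle by citing Galois' theorem or by the short minimal-normal-subgroup argument above.
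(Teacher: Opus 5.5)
Your proposal is correct and follows the paper's proof. Solvability and transitivity of $\Tilde{\G}(A)$, combined with Galois' theorem, give a normal Sylow $p$-subgroup, which must be $\Gamma_A$; then $\tau=\sigma_0\in\Tilde{\G}(A)$ normalizes $\Gamma_A$ and fixes $0$, so $\tau\in\Aut(\Z_p)$. Your extra details are also sound: the order count $|\Tilde{\G}(A)|=p\cdot|\G(A)|$ with $p\nmid|\G(A)|$, the minimal-normal-subgroup substitute for Galois' theorem, and the explicit description of the normalizer as affine maps. They simply make explicit what the paper leaves implicit.
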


\begin{proof}
    By \cref{thm:tilde_G_is_a_group} and \cref{thm:tilde_G_is_solvable}, the extended permutation group $\Tilde{\G}(\Z_p,\tau)$ is solvable and transitive. A classical theorem of Galois \cite[Satz 3.6]{huppert1983endliche} now shows that there is a normal, and therefore unique, $p$-Sylow subgroup $P \leq \Tilde{\G}(\Z_p,\tau)$ of size $p$ such that $\Tilde{\G}(\Z_p,\tau) \leq N_{\Sym_{\Z_p}}(P)$, its normalizer in $\Sym_{\Z_p}$. It follows that $P = \Gamma_{\Z_p}$ and moreover, that $\tau = \sigma_0 \in \Tilde{\G}(\Z_p,\tau)$ normalizes $\Gamma_{\Z_p}$. As $\tau(0) = 0$, this implies that $\tau \in \Aut(\Z_p)$ and thus, $\tau$ is given by $\tau(a) = \alpha \cdot a$ for some $\alpha \in \Z_p\setminus \{ 0 \}$.
\end{proof}

\section{Extended \texorpdfstring{$\lambda$}{lambda}-actions} \label{sec:extended_lambda_action}

Let $A$ be a quasilinear cycle set. We proceed by constructing an action of $\Tilde{\G}(A)$ on $(\G(A),+)$ - the \emph{$\Tilde{\lambda}$-action} - that extends the $\lambda$-action of $(\G(A),\circ)$ on $(\G(A),+)$.

First of all, denote the action of an element $g \in \Tilde{\G}(A)$ on $A$ by $\Tilde{\lambda}_g$. Concretely, this means, that for $g \in \G(A)$ and $a,b \in A$, we have
\[
\Tilde{\lambda}_{\gamma_a \circ g}(b) = a + \lambda_g(b).
\]
The goal of this section is the proof of the following theorem.

\begin{thm} \label{thm:extended_lambda_action}
    For a quasilinear cycle set $A$, there is a unique and well-defined homomorphism $\Tilde{\lambda}: \Tilde{\G}(A) \to \Aut(\G(A),+)$ such that
    \begin{equation} \label{eq:extended_lambda_action}
        \Tilde{\lambda}_g(\lambda_a) = \lambda_{\Tilde{\lambda}_g(a)} \quad (g \in \Tilde{\G}(A), a \in A).
    \end{equation}
\end{thm}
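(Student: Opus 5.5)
The plan is to build $\Tilde{\lambda}$ separately on the two factors of the exact factorization $\Tilde{\G}(A) = \Gamma_A \circ \G(A)$ from \cref{thm:tilde_G_is_a_group}, and then glue the two pieces together. \emph{Uniqueness} is immediate. As in the proof of \cref{thm:permutation_group_of_k_cabling} with $k=1$, we have $(\G(A),+) = \langle \lambda_a : a \in A\rangle_+$. So an additive automorphism is determined by its values on the $\lambda_a$, and these values are prescribed by \eqref{eq:extended_lambda_action}. On $\G(A)$ itself we take $\Tilde{\lambda}_g = \lambda_g$, the $\lambda$-action of the permutation brace; it satisfies \eqref{eq:extended_lambda_action} by \eqref{eq:lambda_g_of_lambda_x}. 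It therefore remains to construct, for each $c \in A$, an automorphism $\psi_c$ of $(\G(A),+)$ with $\psi_c(\lambda_a) = \lambda_{a+c}$.

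For $\psi_c$ I would use the exact factorization itself. Since $\Gamma_A \cap \G(A) = 0$ and $\Gamma_A\circ\G(A) = \G(A)\circ\Gamma_A$, every product $\gamma_c \circ h$ with $h \in \G(A)$ can be written uniquely as
\[
\gamma_c \circ h = \psi_c(h) \circ \gamma_{c_h}, \qquad \psi_c(h) \in \G(A),\ c_h \in A.
\]
Inverting \eqref{eq:switch_sigma_gamma} gives $\gamma_{-b} \circ \lambda_a = \lambda_{a-b} \circ \gamma_{-(a \ast b)}$. Hence $\psi_c(\lambda_a) = \lambda_{a+c}$, which is exactly the required value on generators.

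Uniqueness of the factorization yields the matched-pair rule
\[
\psi_c(h \circ h') = \psi_c(h) \circ \psi_{c_h}(h').
\]
In particular $\psi_c$ is a bijection of $\G(A)$, with inverse coming from $\gamma_{-c}$. It also gives $\psi_c \circ \psi_{c'} = \psi_{c+c'}$.

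The main obstacle is showing that $\psi_c$ is \emph{additive}. I would try to prove $\psi_c(g + \lambda_b) = \psi_c(g) + \lambda_{b+c}$ for all $g \in \G(A)$ and $b \in A$, by induction along expressions of $g$ as a sum of generators $\lambda_x$. The key tools are:
\begin{itemize}
\item the identity $g + \lambda_b = g \circ \lambda_{\lambda_g^{-1}(b)}$, from \eqref{eq:lambda_g_of_lambda_x};
\item the matched-pair rule above;
\item the explicit shift $c_g$, which is computed iteratively from \eqref{eq:switch_sigma_gamma}.
\end{itemize}
Combining these reduces the claim to the identity
\[
\lambda_{\psi_c(g)}^{-1}(b + c) = \lambda_g^{-1}(b) + c_g.
\]
This should be exactly the statement $\gamma_c \circ g = \psi_c(g) \circ \gamma_{c_g}$ evaluated at the point $\lambda_g^{-1}(b)$, after moving the translations to the other side.

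If this direct route becomes tangled, I would fall back on a presentation argument. Realize $(\G(A),+)$ as $\Z^{(A)}/K$, with $K$ the kernel of $e_x \mapsto \lambda_x$. Then check that the permutation of the basis induced by $\gamma_c$ preserves $K$; this reduces to the same conjugation identity applied to words in the $\lambda_x$.

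Finally, define $\Tilde{\lambda}_{\gamma_c \circ g} = \psi_c \circ \lambda_g$. This is well defined by the uniqueness of the factorization, and it is additive and satisfies \eqref{eq:extended_lambda_action} on the generators $\lambda_a$. It remains to check multiplicativity. Since $\Tilde{\lambda}_{g}\circ\Tilde{\lambda}_{g'}$ and $\Tilde{\lambda}_{g\circ g'}$ are both additive maps, it suffices to compare them on the generators $\lambda_a$. There both equal $\lambda_{\Tilde{\lambda}_{g\circ g'}(a)}$, because the action of $\Tilde{\G}(A)$ on $A$ is a group action. Hence $\Tilde{\lambda}$ is a homomorphism.
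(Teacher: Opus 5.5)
Your proof is correct and follows the paper's construction: your $\psi_c$ is exactly the map $\mu_c$ of \eqref{eq:defining_mu}, and the paper likewise defines $\Tilde{\lambda}_{\gamma_c \circ g} = \psi_c \circ \lambda_g$ and checks multiplicativity on the generators $\lambda_a$. The only difference is the additivity step: the paper proves the explicit formula of \cref{lem:switch_gamma_with_lambda_sum} by induction (peeling off the first summand and using \eqref{eq:ba_ca}), whereas your matched-pair argument gives $\psi_c(g+\lambda_b) = \psi_c(g) + \lambda_{b+c}$ outright for every $g \in \G(A)$, because the reduced identity $\psi_c(g)^{-1}(b+c) = g^{-1}(b) + c_g$ is just $\gamma_c \circ g = \psi_c(g) \circ \gamma_{c_g}$ evaluated at $g^{-1}(b)$, so neither an induction on $g$ nor your fallback presentation argument is needed, only the trivial induction on the number of summands of the second argument.
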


This gives rise to the following definition.

\begin{defn} \label{defn:lambda_tilde_action}
    The \emph{$\Tilde{\lambda}$-action} associated with a quasilinear cycle set $A$ is the unique homomorphism $\Tilde{\lambda}: \Tilde{\G}(A) \to \Aut(\G(A),+)$ such that \eqref{eq:extended_lambda_action} is satisfied.
\end{defn}

In order to prove \cref{thm:extended_lambda_action}, and thus verifying the existence of the $\Tilde{\lambda}$-action, we first rewrite \cref{eq:express_sigma_by_gamma} to express the $\lambda$-maps of the cycle set $A$ as
\begin{equation}
    \lambda_a = \gamma_a \circ \tau^{-1} \circ \gamma_{\tau(-a)}.
\end{equation}
Inverting \cref{eq:switch_sigma_gamma}, we obtain the identity $\gamma_{-b} \circ \lambda_a = \lambda_{a-b} \circ \gamma_{-(a \ast b)}$ for $a,b \in A$. Replacing $b$ by $-a$, $a$ by $b$ and checking that $(a + b) \ast a = -(b \ast (-a))$, this can be rewritten in the form 
\begin{equation} \label{eq:switch_gamma_lambda}
    \gamma_a \circ \lambda_b = \lambda_{a+b} \circ \gamma_{(a+b) \ast a}.
\end{equation}

We use \eqref{eq:switch_gamma_lambda} to prove the following lemma.

\begin{lem} \label{lem:switch_gamma_with_lambda_sum}
    Let $a \in A$ and suppose that $b_1,b_2\ldots,b_k \in A$ for some $k \geq 1$. Then there is an $a' \in A$ such that
    \begin{equation}\label{eq:switch_gamma_with_lambda_sum}
        \gamma_a \circ (\lambda_{b_1} + \lambda_{b_2} + \ldots + \lambda_{b_k}) = (\lambda_{a+b_1} + \lambda_{a+b_2} + \ldots + \lambda_{a+b_k}) \circ \gamma_{a'}.
    \end{equation}
\end{lem}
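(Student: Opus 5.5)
The plan is to argue by induction on $k$, peeling off the first summand with the brace identity $x + y = x \circ \lambda_x^{-1}(y)$ in $\G(A)$. We then move $\gamma$ past it using \eqref{eq:switch_gamma_lambda}, and apply the induction hypothesis to the remaining $k-1$ summands. The base case $k = 1$ is exactly \eqref{eq:switch_gamma_lambda}, with $a' = (a+b_1) \ast a$.

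For the induction step, let $k \geq 2$ and write $S = \lambda_{b_1} + S'$ with $S' = \lambda_{b_2} + \ldots + \lambda_{b_k}$. The steps are as follows.
\begin{enumerate}
    \item In the brace $\G(A)$ we have $S = \lambda_{b_1} \circ \lambda_{\lambda_{b_1}}^{-1}(S')$.
    \item Since $\lambda^{-1}_{\lambda_{b_1}}$ is additive, \eqref{eq:lambda_g_of_lambda_x} gives $\lambda_{\lambda_{b_1}}^{-1}(S') = \lambda_{c_2} + \ldots + \lambda_{c_k}$ with $c_i = \lambda_{b_1}^{-1}(b_i) = b_1 \ast b_i$.
    \item By \eqref{eq:switch_gamma_lambda}, $\gamma_a \circ \lambda_{b_1} = \lambda_{a+b_1} \circ \gamma_e$ with $e = (a+b_1) \ast a$.
    \item Applying the induction hypothesis to $\gamma_e \circ (\lambda_{c_2} + \ldots + \lambda_{c_k})$ yields some $a' \in A$ with
    \[
    \gamma_a \circ S = \lambda_{a+b_1} \circ (\lambda_{e+c_2} + \ldots + \lambda_{e+c_k}) \circ \gamma_{a'}.
    \]
    \item Reading the brace identity backwards together with \eqref{eq:lambda_g_of_lambda_x}, we get
    \[
    \lambda_{a+b_1} \circ (\lambda_{e+c_2} + \ldots + \lambda_{e+c_k}) = \lambda_{a+b_1} + \lambda_{\lambda_{a+b_1}(e+c_2)} + \ldots + \lambda_{\lambda_{a+b_1}(e+c_k)}.
    \]
\end{enumerate}

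It remains to check that $\lambda_{a+b_1}(e + c_i) = a + b_i$, or equivalently $(a+b_1) \ast (a + b_i) = e + c_i$. This is where quasilinearity enters, and it is the only step I expect to need care. By the quasilinearity condition $x \ast (u+v) = x \ast u + (x-u) \ast v$ (Rump's reformulation recalled in the remark after the definition), we get
\[
(a+b_1) \ast (a+b_i) = (a+b_1) \ast a + b_1 \ast b_i = e + c_i.
\]
This is exactly what is needed, and it completes the induction. The main obstacle is therefore bookkeeping rather than any real difficulty: one must make sure the same element $e$ appears both from \eqref{eq:switch_gamma_lambda} and from the quasilinear expansion, and that the $c_i$ produced by the brace identity are precisely $b_1 \ast b_i$.
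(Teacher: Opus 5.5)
Your proof is correct and follows essentially the same route as the paper. Both argue by induction on $k$, split off $\lambda_{b_1}$ via $x+y = x\circ\lambda_x^{-1}(y)$, commute $\gamma_a$ past it using \eqref{eq:switch_gamma_lambda}, and apply the induction hypothesis. Both then reassemble using the identity $(a+b_1)\ast(a+b_i) = (a+b_1)\ast a + b_1\ast b_i$: the paper states this as a preliminary observation, and you correctly derive it from quasilinearity.
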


\begin{proof}
    For $k = 1$, this follows from \eqref{eq:switch_gamma_lambda}. Before continuing, we observe that for $a,b,c \in A$, we have
    \[
    (a+b) \ast (a+c) = b \ast c + (a + b) \ast a.
    \]
    which implies that
    \begin{equation}\label{eq:ba_ca}
        \lambda_{a+b}(b \ast c + (a + b) \ast a) = a+c.
    \end{equation}
    
    Suppose now that $k \geq 1$ and that \eqref{eq:switch_gamma_with_lambda_sum} has been proven for $k$ summands. For $k+1$ summands, we then calculate
    \begin{align*}
        \gamma_a \circ (\lambda_{b_1} + \lambda_{b_2} + \ldots + \lambda_{b_{k+1}}) & = \gamma_a \circ \lambda_{b_1} \circ \lambda_{\lambda_{b_1}}^{-1}(\lambda_{b_2} + \ldots + \lambda_{b_{k+1}}) \\
        & = \lambda_{a+b_1} \circ \gamma_{(a+b_1) \ast a} \circ (\lambda_{b_1 \ast b_2} + \ldots + \lambda_{b_1 \ast b_{k+1}}) \\
        & = \lambda_{a+b_1} \circ (\lambda_{b_1 \ast b_2 + (a+b_1) \ast a} + \ldots + \lambda_{b_1 \ast b_{k+1} + (a+b_1) \ast a} ) \circ \gamma_{a'} \\
        & = (\lambda_{a+b_1} + \lambda_{\lambda_{\lambda_{a+b_1}}(b_1 \ast b_2 + (a+b_1) \ast a)} + \ldots + \lambda_{\lambda_{\lambda_{a+b_1}}(b_1 \ast b_{k+1} + (a+b_1) \ast a)} ) \\
        & \quad \circ \gamma_{a'} \\
        & \overset{\textnormal{\eqref{eq:ba_ca}}}{=} (\lambda_{a+b_1} + \lambda_{a+b_2} + \ldots + \lambda_{a+b_{k+1}}) \circ \gamma_{a'}
    \end{align*}
    with some $a' \in A$. By induction, the statement is now proven.
\end{proof}

\begin{proof}[Proof of \cref{thm:extended_lambda_action}]
    We begin by showing the existence of a unique and well-defined map $\mu: A \to \Aut(\G(A),+)$ with $\mu_a(\lambda_b) = \lambda_{a+b}$ for all $a,b \in A$. Uniqueness follows from the fact that $(\G(A),+)$ is generated by the elements $\lambda_a$ ($a \in A$). In order to prove existence, we first define $\mu_a(g) \in \G(A)$ for $a \in A$, $g \in \G(A)$ by the rule
    \begin{equation} \label{eq:defining_mu}
        \gamma_a \circ g = \mu_a(g) \circ \gamma_{a'}
    \end{equation}
    with a suitable $a' \in A$. This is well-defined as $\Gamma_A \cap \G(A) = 0$ (\cref{thm:tilde_G_is_a_group}). As a consequence of \eqref{eq:switch_gamma_lambda} (or \cref{lem:switch_gamma_with_lambda_sum}), it follows that for $a,b \in A$,
    \begin{equation} \label{eq:mu_a_lambda_b}
        \mu_a(\lambda_b) = \lambda_{a+b}
    \end{equation}

    Now let $a \in A$ and $g,h \in \G(A)$. As $\G(A)$ is finite, we can write $g = \lambda_{b_1} + \ldots + \lambda_{b_k}$ and $h = \lambda_{c_1} + \ldots + \lambda_{c_l}$ for some $b_1,\ldots,b_k,c_1,\ldots,c_l \in A$. An application of \cref{lem:switch_gamma_with_lambda_sum} shows that for a suitable $a' \in A$, we have
    \[
    \gamma_a \circ (\lambda_{b_1} + \ldots + \lambda_{b_k}) = (\lambda_{a+b_1} + \ldots + \lambda_{a+b_k}) \circ \gamma_{a'},
    \]
    which shows that $\mu_a(g) = \lambda_{a+b_1} + \ldots + \lambda_{a+b_k}$. Similarly, $\mu_a(h) = \lambda_{a+c_1} + \ldots + \lambda_{a+c_l}$ and $\mu_a(g+h) =  \lambda_{a+b_1} + \ldots + \lambda_{a+b_k} + \lambda_{a+c_1} + \ldots + \lambda_{a+c_l}$, which proves that $\mu_a(g+h) = \mu_a(g) + \mu_a(h)$.

    We can now finish the proof of the theorem. Let
    \[
    \Tilde{\lambda}: \Tilde{\G}(A) \to \Aut(\G(A),+) ;\  \gamma_a \circ g \mapsto \Tilde{\lambda}_{\gamma_a \circ g} = \mu_a \circ \lambda_g \ (a \in A, g \in \G(A)).
    \]

    As the composition of group endomorphisms, each $\Tilde{\lambda}_{h}$ ($h \in \Tilde{\G}(A)$) is indeed an endomorphism of $(\G(A),+)$. That these are indeed automorphisms follows from $\mu_a(\lambda_b) = \lambda_{a+b}$ for $a,b\in A$ (\cref{eq:switch_gamma_lambda}) and the fact that $\G(A)$ is generated by the elements $\lambda_b$ ($b \in A$).
    
    We now verify \eqref{eq:extended_lambda_action}: for $h = \gamma_a \circ g \in \Tilde{\G}(A)$ ($a \in A$, $g \in \G(A)$), we calculate for $b \in A$ that
    \[
    \Tilde{\lambda}_h(\lambda_b) = \mu_a ( \lambda_g (\lambda_b) )
    \overset{\eqref{eq:lambda_is_hom_of_cycle_sets}}{=} \mu_a (\lambda_{\lambda_g(b)}) \overset{\eqref{eq:mu_a_lambda_b}}{=}
    \lambda_{a+\lambda_g(b)} = \lambda_{\Tilde{\lambda}_h(b)}.
    \]
    For $h,h' \in \Tilde{\G}(A)$, $a \in A$, this now implies
    \[
    \Tilde{\lambda}_{h \circ h'}(\lambda_a) = \lambda_{\Tilde{\lambda}_{h \circ h'}(a)} = \lambda_{\Tilde{\lambda}_h (\Tilde{\lambda}_{h'}(a)) } = \Tilde{\lambda}_h (\Tilde{\lambda}_{h'}(\lambda_a)).
    \]
    It follows from the fact that  $\G(A)$ is generated by the elements $\lambda_b$ ($b \in A$) that $\Tilde{\lambda}_{h \circ h'} =  \Tilde{\lambda}_h \circ \Tilde{\lambda}_{h'}$, thus proving the theorem.
    \end{proof}

As a corollary of the proof, we obtain the following relation.

\begin{cor} \label{cor:tilde_gamma_a}
    If $A$ is a quasilinear cycle set, then for all $g \in \G(A)$, $a \in A$, there is an $a' \in A$ such that
    \begin{equation} \label{eq:tilde_gamma_a}
        \gamma_a \circ g = \Tilde{\lambda}_{\gamma_a}(g) \circ \gamma_{a'}.
    \end{equation}
\end{cor}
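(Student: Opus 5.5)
This follows by unwinding the definitions in the proof of \cref{thm:extended_lambda_action}. Let $a \in A$ and $g \in \G(A)$. In that proof, $\mu_a(g) \in \G(A)$ is defined by \eqref{eq:defining_mu}: it is the unique element with
\[
\gamma_a \circ g = \mu_a(g) \circ \gamma_{a'}
\]
for some $a' \in A$. Two facts make this definition sound.

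\emph{Existence.} By \cref{thm:tilde_G_is_a_group} we have $\Gamma_A \circ \G(A) = \G(A) \circ \Gamma_A$. Hence $\gamma_a \circ g$ can be written in the form $h \circ \gamma_{a'}$ with $h \in \G(A)$ and $a' \in A$.

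\emph{Uniqueness.} Since $\Gamma_A \cap \G(A) = 0$, the pair $(h, \gamma_{a'})$ in such a factorization is unique.

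So it remains to identify $\mu_a$ with $\Tilde{\lambda}_{\gamma_a}$.

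By the definition of $\Tilde{\lambda}$ in that proof, $\Tilde{\lambda}_{\gamma_a \circ g'} = \mu_a \circ \lambda_{g'}$. Take $g' = \id$, the identity of $\G(A)$. The $\lambda$-action is a homomorphism, so $\lambda_{\id}$ is the identity of $\Aut(\G(A),+)$. Therefore
\[
\Tilde{\lambda}_{\gamma_a} = \mu_a .
\]
Substituting this into \eqref{eq:defining_mu} gives $\gamma_a \circ g = \Tilde{\lambda}_{\gamma_a}(g) \circ \gamma_{a'}$, which is \eqref{eq:tilde_gamma_a}.

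There is one point to check. The formula $\Tilde{\lambda}_{\gamma_a \circ g} = \mu_a \circ \lambda_g$ was used as a definition, and the uniqueness part of \cref{thm:extended_lambda_action} confirms it is the same $\Tilde{\lambda}$ as in \cref{defn:lambda_tilde_action}. As a cross-check independent of that construction, both $\mu_a$ and $\Tilde{\lambda}_{\gamma_a}$ are additive and send each $\lambda_b$ to $\lambda_{a+b}$: for $\mu_a$ this is \eqref{eq:mu_a_lambda_b}, and for $\Tilde{\lambda}_{\gamma_a}$ it is \eqref{eq:extended_lambda_action}. Since the $\lambda_b$ generate $(\G(A),+)$, the two maps agree. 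I expect no real obstacle here.
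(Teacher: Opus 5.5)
Your proposal is correct and matches the paper, which reads the corollary off the proof of \cref{thm:extended_lambda_action}: $\mu_a(g)$ is defined by the factorization \eqref{eq:defining_mu}, and $\Tilde{\lambda}_{\gamma_a} = \mu_a \circ \lambda_0 = \mu_a$. Your extra cross-check via the generators $\lambda_b$ just confirms the uniqueness already stated in \cref{thm:extended_lambda_action}.
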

    
\begin{rem}
    We note that the $\Tilde{\lambda}$-action does not come from a brace structure on $\Tilde{\G}(A)$, in general. More precisely, there is no brace structure on $\Tilde{\G}(A)$ such that $\G(A)$ is a subbrace of $\Tilde{\G}(A)$ and such that $\lambda_g(h) = \Tilde{\lambda}_g(h)$ for $g \in \Tilde{\G}(A)$, $h \in \G(A)$. Observe first that in such a case, $\G(A)$ would be a left ideal.

    Consider the quasilinear cycle set $A = \Z_{15}$ with $\tau = (4,9,14)$. As $d_A = o(\tau) = 3$ (\cref{cor:dehornoy_class_is_order_of_tau}), we see that the only primes dividing $\Tilde{\G}(A)$ are $3$ and $5$. Suppose that $\Tilde{\G}(A)$ can be equipped with a brace structure such that $\G(A)$ becomes a left ideal of $\Tilde{\G}(A)$. Then $\Tilde{\G}(A)$ has to act trivially on the (additive) quotient $Q = \Tilde{\G}(A) / \G(A)$, as for an abelian group $B$ of size $15$, $|\Aut(B)| = 8$ is not divisible by $3$ or $5$.

    But then the quotient map $\Tilde{\G}(A) \twoheadrightarrow \mathrm{Triv}(Q)$; $g \mapsto g + \G(A)$, where $Q$ is the trivial brace on the abelian group $Q$, can be checked to be a brace homomorphism whose kernel is $\G(A)$. However, $\G(A)$ is not normal in $\Tilde{\G}(A)$, as $\gamma_1 \circ \sigma_0 \circ \gamma_1^{-1} = (0,5,10) \not\in \G(A)$.
\end{rem}

We finally show that, although the $\Tilde{\lambda}$-action is not the $\lambda$-action of a brace structure on $\Tilde{\G}(A)$, it gives rise to something related, namely a transitive affine action of $\Tilde{\G}(A)$ on $(\G(A),+)$. This result is not needed in the following but we believe it to be of independent interest.

\begin{pro} \label{pro:tilde_g_is_transitive_affine}
    For each quasilinear cycle set $A$, the group $\Tilde{\G}(A)$ acts transitively on $\G(A)$ via
    \begin{equation} \label{eq:affine_action}
        \alpha: \Tilde{\G}(A) \times \G(A) \to \G(A);\ (\gamma_a \circ g, c) \mapsto \alpha_{\gamma_a \circ g}(c) = \Tilde{\lambda}_{\gamma_a}(g \circ c) \ (a \in A, g \in \G(A)).
    \end{equation}
\end{pro}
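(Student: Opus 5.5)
The idea is to recognize $\alpha$ as the natural left-multiplication action of $\Tilde{\G}(A)$ on the left cosets of $\Gamma_A$, with $\G(A)$ serving as a transversal. By \cref{thm:tilde_G_is_a_group} we have $\Tilde{\G}(A) = \G(A) \circ \Gamma_A$ and $\Gamma_A \cap \G(A) = 0$. Hence every $x \in \Tilde{\G}(A)$ has a \emph{unique} factorization $x = \pi(x) \circ \gamma_{b}$ with $\pi(x) \in \G(A)$ and $b \in A$. In the same way, every $h \in \Tilde{\G}(A)$ can be written uniquely as $h = \gamma_a \circ g$ with $a \in A$, $g \in \G(A)$. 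This second factorization shows that the right-hand side of \eqref{eq:affine_action} depends only on $h$, so $\alpha$ is well-defined as a map.

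The key step is the identity
\[
\alpha_h(c) = \pi(h \circ c) \quad (h \in \Tilde{\G}(A),\ c \in \G(A)).
\]
To prove it, write $h = \gamma_a \circ g$. Since $g \circ c \in \G(A)$, applying \cref{cor:tilde_gamma_a} to the element $g \circ c$ gives
\[
h \circ c = \gamma_a \circ (g \circ c) = \Tilde{\lambda}_{\gamma_a}(g \circ c) \circ \gamma_{a'}
\]
for some $a' \in A$. By uniqueness of the factorization, $\pi(h \circ c) = \Tilde{\lambda}_{\gamma_a}(g \circ c) = \alpha_h(c)$. The only subtle point here is to apply \cref{cor:tilde_gamma_a} to the product $g \circ c$ rather than to $g$ alone; once that is done, the identity is immediate.

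The action axioms then follow formally. For $h, h' \in \Tilde{\G}(A)$ and $c \in \G(A)$, write $h' \circ c = \alpha_{h'}(c) \circ \gamma_{b}$ and $h \circ \alpha_{h'}(c) = \alpha_h(\alpha_{h'}(c)) \circ \gamma_{b'}$. Then
\[
(h \circ h') \circ c = \alpha_h(\alpha_{h'}(c)) \circ \gamma_{b'+b},
\]
so uniqueness gives $\alpha_{h \circ h'}(c) = \alpha_h(\alpha_{h'}(c))$. Moreover $\alpha_{\id}(c) = \pi(c) = c$. So $\alpha$ is a group action; in fact it is the action of $\Tilde{\G}(A)$ on $\Tilde{\G}(A)/\Gamma_A$, transported to $\G(A)$ via the bijection $c \mapsto c \circ \Gamma_A$.

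Transitivity is then immediate. For any $c \in \G(A) \subseteq \Tilde{\G}(A)$ we have $\alpha_c(0) = \pi(c) = c$, so the orbit of $0$ is all of $\G(A)$. One may also note that $\alpha_{\gamma_a \circ g} = \Tilde{\lambda}_{\gamma_a} \circ (c \mapsto g \circ c)$, and left multiplication in a brace is $c \mapsto g + \lambda_g(c)$. Hence each $\alpha_h$ is an affine map of $(\G(A),+)$, which justifies calling the action affine, although this is not needed for the statement.
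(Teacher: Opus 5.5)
Your proof is correct, and it takes a genuinely different route from the paper's. The paper first rewrites the action in affine form, $\alpha_{\gamma_a\circ g}(c)=\Tilde{\lambda}_{\gamma_a}(g)+\Tilde{\lambda}_{\gamma_a\circ g}(c)$. It then checks $\alpha_{\gamma_b\circ h}\circ\alpha_{\gamma_a\circ g}=\alpha_{(\gamma_b\circ h)\circ(\gamma_a\circ g)}$ by direct computation: \cref{cor:tilde_gamma_a} gives $h\circ\gamma_a=\gamma_{a'}\circ h'$ with $h'=\Tilde{\lambda}_{\gamma_{a'}}^{-1}(h)$, and the terms are reassembled using the homomorphism property of $\Tilde{\lambda}$ and the brace identity $h'+\lambda_{h'}(g)=h'\circ g$.

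You instead apply \cref{cor:tilde_gamma_a} to the product $g\circ c$ and obtain $\alpha_h(c)=\pi(h\circ c)$. This identifies $\alpha$ with left multiplication of $\Tilde{\G}(A)$ on the left cosets $\Tilde{\G}(A)/\Gamma_A$, with $\G(A)$ as a transversal. Well-definedness, the action axioms and transitivity then follow formally from uniqueness of the factorization $\Tilde{\G}(A)=\G(A)\circ\Gamma_A$.

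Your route is shorter, and beyond \cref{cor:tilde_gamma_a} itself it uses neither the homomorphism property of $\Tilde{\lambda}$ nor the brace identity. It also gives extra information for free: since $\alpha_h(0)=\pi(h)$, the stabilizer of $0$ is exactly $\Gamma_A$, so $\alpha$ is permutation-isomorphic to the coset action on $\Tilde{\G}(A)/\Gamma_A$.

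The paper's computation instead puts the affine structure up front: each $\alpha_h$ is an affine map of $(\G(A),+)$ with linear part $\Tilde{\lambda}_h$, so $\alpha$ lands in the holomorph, which is what the introduction advertises. In your write-up this appears only as a closing remark. As you note, though, it follows in one line from $g\circ c=g+\lambda_g(c)$ and the additivity of $\Tilde{\lambda}_{\gamma_a}$.
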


\begin{proof}
    We rewrite \eqref{eq:affine_action} as
    \begin{equation} \label{eq:rewrite_affine_action}
    \alpha_{\gamma_a \circ g}(c) = \Tilde{\lambda}_{\gamma_a}(g + \lambda_g(c)) = \Tilde{\lambda}_{\gamma_a}(g) + \Tilde{\lambda}_{\gamma_a \circ g}(c).
    \end{equation}
    A straightforward calculation shows that $\alpha_0(g) = g$ for all $g \in \G(A)$. Now let $a,b \in A$, $g,h \in \G(A)$. By \cref{cor:tilde_gamma_a}, there is an $a' \in A$ such that $h \circ \gamma_a = \gamma_{a'} \circ \Tilde{\lambda}_{\gamma_{a'}}^{-1}(h) =: \gamma_{a'} \circ h'$. With \eqref{eq:rewrite_affine_action}, we now calculate
    \begin{align*}
        \alpha_{\gamma_b \circ h}(\alpha_{\gamma_a \circ g}(c)) & = \Tilde{\lambda}_{\gamma_b}(h) + \Tilde{\lambda}_{\gamma_b \circ h}(\Tilde{\lambda}_{\gamma_a}(g) + \Tilde{\lambda}_{\gamma_a \circ g}(c)) \\
        & = \Tilde{\lambda}_{\gamma_b}(h) + \Tilde{\lambda}_{\gamma_b \circ h \circ \gamma_a}(g) + \Tilde{\lambda}_{\gamma_b \circ h \circ \gamma_a \circ g}(c) \\
        & = \Tilde{\lambda}_{\gamma_b \circ \gamma_{a'}}(h') + \Tilde{\lambda}_{\gamma_b \circ \gamma_{a'} \circ h'}(g) + \Tilde{\lambda}_{\gamma_b \circ h \circ \gamma_a \circ g}(c) \\
        & = \Tilde{\lambda}_{\gamma_b \circ \gamma_{a'}}(h' + \Tilde{\lambda}_{h'}(g))  +  \Tilde{\lambda}_{\gamma_b \circ h \circ \gamma_a \circ g}(c) \\
        & = \Tilde{\lambda}_{\gamma_b \circ \gamma_{a'}}(h' \circ g)  +  \Tilde{\lambda}_{\gamma_b \circ \gamma_{a'} \circ h' \circ g}(c) \\
        & = \alpha_{\gamma_b \circ \gamma_{a'} \circ h' \circ g}(c) = \alpha_{(\gamma_b \circ h) \circ (\gamma_a \circ g)}(c).
    \end{align*}
    Therefore, $\alpha$ does define an action $\Tilde{\G}(A)$ on $\G(A)$. As $\alpha_g(0) = g$ for $g \in \G(A)$, this action is transitive.
\end{proof}

\section{Proof of Rump's retraction conjecture} \label{sec:solution_of_rumps_conjecture}

In this section, we use the machinery developed in the previous sections to provide a full proof of Rump's conjecture.

Before doing so, we prove a simple lemma concerning invariant equivalence relations. If $G \times X \to X$; $(g,x) \mapsto g \cdot x$ is an action of a group $G$ on a set $X$, then we call an equivalence relation $\sim$ on $X$ \emph{$G$-invariant} if for all $g \in G$, $x,y \in X$, the implication $x \sim y \Rightarrow g \cdot x \sim g \cdot y$ is valid.

\begin{lem} \label{lem:regular_abelian_subgroup_fixing_equivalence_relation}
    Let $G \times X \to X$; $(g,x) \mapsto g \cdot x$ be an action of a group $G$ on a set $X$ and let $\sim$ be a $G$-invariant equivalence relation on $X$. Furthermore, let $A \leq G$ be an abelian subgroup that acts transitively on $X$. Moreover, let $a \in A$ be such that there is an $x_0 \in X$ with $a \cdot x_0 \sim x_0$. Then $a \cdot x \sim x$ for all $x \in X$.
\end{lem}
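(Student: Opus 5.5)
The argument is short: transport the single relation $a \cdot x_0 \sim x_0$ to an arbitrary point using an element of $A$, and use commutativity to move that element past $a$.

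Let $x \in X$ be arbitrary. Since $A$ acts transitively on $X$, I pick some $b \in A$ with $b \cdot x_0 = x$. The given relation is $a \cdot x_0 \sim x_0$. Because $\sim$ is $G$-invariant and $b \in A \leq G$, applying $b$ to both sides gives
\[
b \cdot (a \cdot x_0) \sim b \cdot x_0 = x .
\]

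It remains to identify the left-hand side. By the action axioms, $b \cdot (a \cdot x_0) = (b a) \cdot x_0$. Since $A$ is abelian, $ba = ab$, so
\[
(b a) \cdot x_0 = (a b) \cdot x_0 = a \cdot (b \cdot x_0) = a \cdot x .
\]
Substituting this into the previous display yields $a \cdot x \sim x$. As $x$ was arbitrary, this proves the lemma.

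The key step is the interchange $ba = ab$. This is the only place commutativity of $A$ is used, and transitivity of $A$ is used only to produce $b$. No deeper input from the paper is needed.
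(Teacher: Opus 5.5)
Your proposal is correct and is essentially the paper's proof: write $x = b \cdot x_0$ with $b \in A$, use commutativity to get $a \cdot x = b \cdot (a \cdot x_0)$, and apply $G$-invariance of $\sim$ to $a \cdot x_0 \sim x_0$. The paper does exactly this in a single line.
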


\begin{proof}
    Let $x \in X$. Then $x = b \cdot x_0$ for some $b \in A$, therefore $a \cdot x = a \cdot (b \cdot x_0) = b \cdot (a \cdot x_0) \sim b \cdot x_0 = x$.
\end{proof}

We can now give a positive answer to \cref{conjecture:retraction_conjecture}.

\begin{thm} \label{thm:rumps_conjecture}
    If $A$ is a quasilinear cycle set with $|A| > 1$, then $\Soc(A) \neq 0$.
\end{thm}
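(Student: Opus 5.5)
The plan is to turn the retraction relation into a $\Tilde{\G}(A)$-invariant equivalence relation on $A$ and apply \cref{lem:regular_abelian_subgroup_fixing_equivalence_relation} with the regular abelian subgroup $\Gamma_A$. Consider the map $\Lambda: A \to \G(A)$, $a \mapsto \lambda_a$. By \eqref{eq:extended_lambda_action} it is $\Tilde{\G}(A)$-equivariant, where $\Tilde{\G}(A)$ acts on $A$ naturally and on $\G(A)$ through the $\Tilde{\lambda}$-action. So for any $\Tilde{\lambda}$-invariant subgroup $H \leq (\G(A),+)$, the relation $a \approx_H b \Leftrightarrow \lambda_a - \lambda_b \in H$ is $\Tilde{\G}(A)$-invariant. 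By \cref{lem:regular_abelian_subgroup_fixing_equivalence_relation} applied to $\Gamma_A$, the class of $0$ is then a subgroup $B_H \leq A$, and the classes are its cosets. Since $\tau = \sigma_0 \in \Tilde{\G}(A)$ preserves classes and fixes $0$, $B_H$ is an ideal. For $H = 0$ we get $B_0 = \Soc(A)$ by \cref{thm:socle_and_retraction_of_quasilinear_cycle_set}, so it suffices to show $B_0 \neq 0$.

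The natural choices for $H$ are the characteristic subgroups $\G(A)_{p'}$, which are $\Tilde{\lambda}$-invariant because $\Tilde{\lambda}$ acts by automorphisms of $(\G(A),+)$. Put $B^{(p)} = B_{\G(A)_{p'}}$, so that $a \in B^{(p)}$ exactly when $\lambda_a$ and $\lambda_0$ have the same $p$-component. Then $\Soc(A) = \bigcap_p B^{(p)}$, and $A/B^{(p)}$ is in $\Tilde{\G}(A)$-equivariant bijection with the orbit of the $p$-components of the $\lambda_a$. By \cref{pro:sylows_from_cabling}, this orbit is the generating set of $\G(A^{[d_{p'}]})$, whose $\tau$-map $\tau^{d_{p'}}$ has $p$-power order by \cref{pro:cabling_of_quasilinear_cycle_sets} and \cref{cor:dehornoy_class_is_order_of_tau}.

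I would argue by induction on $|A|$. If $\tau = \id_A$, then $\Soc(A) = A \neq 0$ and we are done. Otherwise fix a prime $p$ dividing $d_A = o(\tau)$, work inside the quotient quasilinear cycle set $A/B^{(p)}$, and use \cref{pro:fix_is_in_socle}. Note that $b \in \Fix(A)$ if and only if $\gamma_b$ commutes with $\tau$, by \eqref{eq:express_sigma_by_gamma}, and this happens if and only if $\gamma_b$ is central in $\Tilde{\G}(A)$. So I would try to produce a nontrivial translation centralizing $\tau$, either in $A$ or in a suitable cabling $A^{[k]}$. Here solvability of $\Tilde{\G}(A)$ (\cref{thm:tilde_G_is_solvable}) should help, via an abelian minimal normal $p$-subgroup $N$ that meets $\Gamma_A$ nontrivially, as in the proof for $\Z_p$.

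The main obstacle is the step from these ideals to a nontrivial intersection: the $B^{(p)}$ could a priori be complementary, for instance each $B^{(p)} = A_{p'}$. I expect the key to be showing that for a suitable prime $p$ one has $A_p \subseteq B^{(q)}$ for all $q \neq p$. This would follow if the $\Tilde{\G}(A)$-orbit $A/B^{(q)}$ had $q$-power size, which I would try to prove by showing that the stabiliser of $0$ in $A/B^{(q)}$ contains a Hall $q'$-subgroup of the solvable group $\Tilde{\G}(A)$. Given that, it remains to find a nonzero element of $A_p$ in $B^{(p)}$. I would obtain this by applying the fixator argument to the $p$-power-order map $\tau^{d_{p'}}$ acting on the $p$-group $A_p$: a $p$-group acting on a nontrivial $p$-group fixes a nonzero element, and we want such an element to lie in $\Fix$ of the relevant cabling.
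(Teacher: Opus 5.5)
\paragraph{The gap: your key lemma is false.} Your set-up is correct and is essentially the paper's. Indeed $B^{(p)}=\Soc(A^{[d_{p'}]})$, and the paper's relation $\sim$ ($d_p\lambda_a=d_p\lambda_b$) is your $\approx_{\G(A)_p}$, whose zero class is $\Soc(A^{[d_p]})=\bigcap_{q\neq p}B^{(q)}$. The gap is the step you flag yourself, and the route you propose for it does not work, because the intermediate claim is false.

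Take $C=\Z_{15}$ with $\tau_C(x)=x+5\epsilon(x)$, where $\epsilon(x)=1$ if $x\equiv 4\pmod 5$ and $\epsilon(x)=0$ otherwise; this is the paper's example $\tau=(4,9,14)$. Take $D=\Z_{15}$ with $\tau_D(x)=x+3\epsilon'(x)$, where $\epsilon'(x)=1$ if $x\equiv 2\pmod 3$ and $0$ otherwise. In $C$ one has $a\ast b=b+5(\epsilon(b-a)-\epsilon(-a))$, hence $(a\ast b)\ast(a\ast c)=c+5(\epsilon(c-a)-\epsilon(-a)+\epsilon(c-b)-\epsilon(-b))$, so \eqref{eq:c2} holds; $D$ is analogous. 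A direct computation gives $\Soc(C^{[k]})=5\Z_{15}$ for $3\nmid k$ and $\Soc(D^{[k]})=3\Z_{15}$ for $5\nmid k$.

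For the product $A=C\times D$ with $\tau=\tau_C\times\tau_D$ we get $d_A=15$, $B^{(3)}=\Soc(A^{[5]})=5\Z_{15}\times D$ and $B^{(5)}=\Soc(A^{[3]})=C\times 3\Z_{15}$. Consequently:
\begin{itemize}
\item $|A/B^{(3)}|=5$ and $|A/B^{(5)}|=3$; already $C$ alone has $|C/B^{(3)}|=5$. So the orbit $A/B^{(q)}$ need not have $q$-power size, and the stabiliser of $0$ need not contain a Hall $q'$-subgroup.
\item Neither $A_3\subseteq B^{(5)}$ nor $A_5\subseteq B^{(3)}$ holds, so no prime $p$ is ``suitable''.
\end{itemize}
Asking for a whole primary component is too much; you need to produce a single element.

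Your last step is also not yet an argument. A fixed point of $\tau^{d_{p'}}$ need not lie in $\Fix(A^{[d_{p'}]})$: for instance $\tau_C(1)=1$, but $\tau_C(3+1)\neq\tau_C(3)+1$. The right tool is the whole $p$-group $\G(A)_p=\G(A^{[d_{p'}]})$. Its common fixed points are exactly $\Fix(A^{[d_{p'}]})$, since $\sigma_a(b)=b$ for all $a$ if and only if $b\in\Fix$. You should let it act on a set that contains $0$, has size divisible by $p$, and is stabilised by $\G(A)_p$.

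\paragraph{What is missing.} You need the paper's two minimal-normal-subgroup arguments.

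First, choose $p$ via a minimal normal subgroup $M$ of $(\G(A),\circ)$. It is a $p$-group contained in $\G(A)_p$, so by \cref{pro:normal_subgroups_acting_trivial} it acts trivially on $\G(A)_{p'}$. Hence $d_p\lambda_{\lambda_g(a)}=d_p\lambda_a$ for all $g\in M$. Since $M\neq 0$ moves some point, this gives $\bigcap_{q\neq p}B^{(q)}\neq 0$.

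Second, let $S$ be the kernel of the action of $\Gamma_A\circ\G(A)_p$ on the $\approx_{\G(A)_p}$-classes; it is nontrivial by \cref{lem:regular_abelian_subgroup_fixing_equivalence_relation}. Take a minimal normal subgroup $M'\leq S$ of $\Gamma_A\circ\G(A)_p$; it is a $q$-group.
\begin{itemize}
\item If $q\neq p$, then $M'\leq\Gamma_{A_q}$. Normality, together with \cref{cor:tilde_gamma_a} and $\Gamma_A\cap\G(A)_p=0$, forces $d_{p'}\lambda_{a+b}=d_{p'}\lambda_b$ for $\gamma_a\in M'$, i.e.\ $a\in B^{(p)}$.
\item If $q=p$, the $M'$-orbit of $0$ is a block of size divisible by $p$ that is stabilised by $\G(A)_p$. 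So it contains some $f\neq 0$ with $f\in\Fix(A^{[d_{p'}]})\subseteq B^{(p)}$, and $f\approx_{\G(A)_p}0$ because $M'\leq S$.
\end{itemize}
Either way one obtains a nonzero element of $\bigcap_q B^{(q)}=\Soc(A)$. The induction on $|A|$ and the quotient $A/B^{(p)}$ that you mention play no role in this.
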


\begin{proof}
    If $\tau = \id_A$, we have $\Soc(A) = A$. Therefore, we will for the rest of the proof assume that $\tau \neq \id_A$, with the consequence that $\G(A) \neq 0$.

    Let $0 \neq M \unlhd \G(A)$ be a minimal normal subgroup. As $\G(A)$ is solvable, $M$ is a $p$-subgroup for some prime $p$. As $M$ is normal, Sylow's theorems imply that $M$ is contained in all Sylow $p$-subgroups of $\G(A)$. In particular, $M \leq \G(A)_p$. As $\G(A)_p \cap \G(A)_{p'} = 0$, \cref{pro:normal_subgroups_acting_trivial} implies that $\lambda_g(h) = h$ for all $g \in M$ and $h \in \G(A)_{p'}$.

    Denoting the Dehornoy class of $A$ by $d = d_A$, this implies that for all $g \in M$, $a \in A$,
    \[
    d_p  \lambda_{\lambda_g(a)} = \lambda_g(d_p  \lambda_a) = d_p  \lambda_a.
    \]
    As $M \neq 0$, this shows that the equivalence relation $\sim$ on $A$ that is defined by $a \sim b \Leftrightarrow d_p  \lambda_a = d_p  \lambda_b$ is nontrivial. We claim that it is invariant under the $\Tilde{\lambda}$-action: for $g \in \Tilde{\G}(A)$, we can use \cref{thm:extended_lambda_action} to infer that
    \[
    a \sim b \Rightarrow d_p  \lambda_a = d_p  \lambda_b \Rightarrow \Tilde{\lambda}_g(d_p  \lambda_a) = \Tilde{\lambda}_g(d_p  \lambda_b) \Rightarrow d_p  \lambda_{\Tilde{\lambda}_g(a)} = d_p  \lambda_{\Tilde{\lambda}_g(b)} \Rightarrow \Tilde{\lambda}_g(a) \sim \Tilde{\lambda}_g(b).
    \]
    Now consider the subgroup $\Tilde{\G}(A)_p = \Tilde{\G}(A^{[d_{p'}]}) = \Gamma_A \circ \G(A)_p \leq \Tilde{\G}(A)$. Note that $\Tilde{\G}(A)_p$ does not need to be a $p$-group! We now define
    \[
    S = \{ g \in \Tilde{\G}(A)_p \ :  \ \forall a \in A: a \sim \Tilde{\lambda}_g(a) \} \unlhd \Tilde{\G}(A)_p.
    \]
    If $0 \neq a \in A$ is such that $a + b \sim b$, for some $b \in A$, then $\gamma_a \in S$ by \cref{lem:regular_abelian_subgroup_fixing_equivalence_relation}. As $\sim$ is nontrivial, this shows that $S \neq 0$.

    Choose a minimal normal subgroup $M' \unlhd \Tilde{\G}(A)_p$ with $M' \leq S$. Then $M'$ is a $q$-group for some prime $q$. If $q \neq p$, then $M'$ has to be contained in $\Gamma_{A_q}$, where $A_q$ denotes the $q$-primary component of $A$. This follows after observing that $\Gamma_{A_q}$ is a $q$-Sylow subgroup of $\Tilde{\G}(A)_p$.
    
    Let $\gamma_a \in M'$, where $0 \neq a \in A_q$. Furthermore, let $b \in A$. Then by \cref{cor:tilde_gamma_a}, there is an $a' \in A$ such that
    \[
    \gamma_a \circ d_{p'}\lambda_b \overset{\eqref{eq:tilde_gamma_a}}{=} \Tilde{\lambda}_{\gamma_a}(d_{p'}\lambda_b) \circ \gamma_{a'} = d_{p'}\lambda_{a+b} \circ \gamma_{a'}.
    \]
    It now follows from $\G(A)_p \cap \Gamma_A = 0$ and the normality of $M'$ that $a' \in A_q$ and $d_{p'} \lambda_{a+b} = d_{p'} \lambda_b$. As $\gamma_a \in S$, we also have $d_{p} \lambda_{a+b} = \Tilde{\lambda}_{\gamma_a}(d_p \lambda_b) = d_{p} \lambda_b$ for all $b \in A$. Writing $1 = m d_p + m' d_{p'}$ (Bezout's lemma), we see that for all $b \in A$,
    \[
    \lambda_{a+b} =  m d_p \lambda_{a+b} + m' d_{p'}\lambda_{a+b} = m d_p \lambda_{b} + m' d_{p'}\lambda_{b} = \lambda_b,
    \]
    which implies $0 \neq a \in \Soc(A)$ by \cref{thm:socle_and_retraction_of_quasilinear_cycle_set}.

    It remains to consider the case $q = p$. In that case, we consider the orbits $\mathcal{O}_a = \{ \Tilde{\lambda}_g(a) : g \in M' \}$ ($a \in A$). As $M'$ is normal in $\Tilde{\G}(A)_p$, the set $\{ \mathcal{O}_a : a \in A \}$ is a block system for the action of $\Tilde{\G}(A)_p$ on $A$, where $|\mathcal{O}_a| > 1$ as $M' \neq 0$. As $\lambda_g(0) = 0$ for all $g \in \G(A)_p$, it follows that $\lambda_g(\mathcal{O}_0) = \mathcal{O}_0$ for all $g \in \G(A)_p$. As $M'$ is a $p$-group, it follows that $p$ divides $|\mathcal{O}_0|$. Let
    \[
     F = \{ a \in \mathcal{O}_0  \ : \ \forall g \in \G(A)_p: \lambda_g(a) = a \}
    \]
    be the set of fixed points for the action of $\G(A)_p$ on $\mathcal{O}_0$. As $\G(A)_p$ is a $p$-group, $p$ divides $|\mathcal{O}_0| - |F|$. As $0 \in F$, this implies that $F \setminus \{ 0 \}$ is nonempty. Let $f \in F \setminus \{ 0 \}$, then $f \in \Fix(A^{[d_{p'}]}) \subseteq \Soc(A^{[d_{p'}]})$ (\cref{pro:fix_is_in_socle}). As the retraction classes of $A^{[d_{p'}]}$ are the cosets of $\Soc(A^{[d_{p'}]})$ (\cref{thm:socle_and_retraction_of_quasilinear_cycle_set}), we see that $d_{p'}\lambda_{f+a} = d_{p'}\lambda_a$ for all $a \in A$.

    As $f$ is contained in the orbit of $0$ under the action of $S$, it follows that $0 \sim f$. As $\gamma_f(0) = f$, we see that $\gamma_f \in S$ by \cref{lem:regular_abelian_subgroup_fixing_equivalence_relation}. As this means that $d_p \lambda_{f+a} = \Tilde{\lambda}_{\gamma_f}(d_p \lambda_a) = d_p \lambda_a$, we argue as before with Bezout's lemma together with \cref{thm:socle_and_retraction_of_quasilinear_cycle_set}, that $0 \neq f \in \Soc(A)$.
\end{proof}

The reader might have noticed that the $\Tilde{\lambda}$-action has only been used in two places of the proof - when showing that the retraction classes of $A^{[d_p]}$ form a block system for the action of $\Tilde{\G}(A)$ on $A$, and when rewriting the products $\gamma_a \circ d_{p'}\lambda_b$ with respect to the factorization $\Tilde{\G}(A) = \G(A) \circ \Gamma_A$. These specific steps in the proof could also have been handled by suitable ad hoc arguments using relations like \eqref{eq:switch_gamma_lambda} on the $\lambda$-maps of a suitable cabling of $A$. However, we decided in favour of developing and employing the $\Tilde{\lambda}$-action, since this more conceptual approach provides deeper insights into the structure of quasilinear cycle sets and might also be generalized to other cases of cycle set structures on sets with a group action.

\bibliography{references}
\bibliographystyle{abbrv}

\end{document}